\numberwithin{equation}{section}
\theoremstyle{plain}
\newtheorem{theorem}{Theorem}[section]
\newtheorem{lemma}{Lemma}[section]
\newtheorem{definition}{Definition}[section]
\newtheorem{example}{Example}[section]
\newtheorem{corollary}{Corollary}[section]
\newtheorem{remark}{Remark}[section]
\newcommand{\R}{\mathbb R}
\newcommand{\N}{\mathbb N}
\newcommand{\Z}{\mathbb{Z}}
\begin{document}

\begin{frontmatter}
\title{A unified approach to  Stein characterizations}
\thankstext{t1}{This is an original survey paper}
\runtitle{Stein characterizations}

\begin{aug}
\author{\fnms{Christophe} \snm{Ley}\thanksref[1]{1}\ead[label=e1]{chrisley@ulb.ac.be}}
\and
\author{\fnms{Yvik} \snm{Swan}\thanksref[2]{2}\ead[label=e2]{yvswan@ulb.ac.be}}

\thankstext[1]{1}{Supported by a Mandat d'Aspirant from the Fonds National de la Recherche Scientifique, Communaut\'e fran\c{c}aise de Belgique. Christophe Ley is also member of E.C.A.R.E.S.}
\thankstext[2]{2}{Supported by a Mandat de Charg\'e de recherche from the Fonds National de la Recherche Scientifique, Communaut\'e fran\c{c}aise de Belgique.}
\runauthor{C. Ley and Y. Swan.}

\affiliation{Universit\'e Libre de Bruxelles}
\address{Department of Mathematics\\
Universit\'e Libre de Bruxelles\\
Campus Plaine -- CP210\\
B 1050 Brussels,  Belgium\\
\printead{e1},
\printead*{e2}}
%
\end{aug}

\begin{abstract}
This article deals with  Stein characterizations of probability distributions.  We  provide a general framework for interpreting these in terms of the parameters of the underlying distribution. In order to do so we introduce two concepts (a class of functions and an operator) which generalize those which were developed in the 70's by Charles Stein and  Louis Chen  for characterizing the Gaussian and the Poisson distributions. Our methodology (i) allows for writing many (if not all) known univariate Stein characterizations, (ii)  permits to  identify clearly minimal conditions under which these results hold and (iii)  provides  a straightforward tool for constructing new Stein characterizations. Our  parametric interpretation of Stein characterizations also raises a number of questions which we outline at the end of the paper. 
 \end{abstract}

\begin{keyword}[class=AMS]
\kwd[Primary ]{20B10}
\kwd[; secondary ]{62E17} 
\end{keyword}

\begin{keyword}
\kwd{Characterization theorems}
\kwd{Stein characterizations}
\kwd{location and scale parameters}
\kwd{parameter of interest}
\kwd{generalized (standardized) score function}
\end{keyword}

\end{frontmatter}


\section{Introduction : background and motivation}

 
 {Stein's method} is a technique   for  obtaining bounds on a ``distance'' between an unknown probability distribution  and a given  {\em target distribution}. The method stems from two papers published in the 1970's by Charles Stein  (concerning Gaussian approximation, see \citep{S72})  and  Louis H. Chen (concerning Poisson approximation, see \citep{C75}). Since those days a substantial body of work has been devoted  to extensions of the method, the literature  on the subject now being vast and varied. We refer the reader to the monographs \citep{BHJ92}, \citep{BC05}, \citep{BC05b} or \citep{CGS10}.

The gist of the method can be summarized as follows.  Suppose that, for a given target distribution  $g : \mathcal{X} \to \mathcal{X}$ dominated by a measure $\mu$ on  some probability space $(\mathcal{X}, \mathcal{A}, \mu)$,  there exists a class of functions $\mathcal{F}(g) \subset \mathcal{X}^\star := \{\psi:\mathcal{X} \to \R, \mu-\mbox{measurable}\}$   and     an operator $\mathcal{T}( \cdot ,g) :\mathcal{X}^\star \to \mathcal{X}^\star$  such that 
\begin{equation}\label{eq:c1}X \sim g(\cdot) \Longleftrightarrow  {\rm E}[\mathcal{T}(f,g)(X)] = 0 \mbox{  for all  }f  \in \mathcal{F}(g), \end{equation}
where by $X \sim g(\cdot)$ we mean ${\rm P}(X\in A) = \int_Ag(y)d\mu(y)$ for all measurable $A\subset \mathcal{X}$. Now let $Z \sim g(\cdot)$ and suppose that we are interested in studying a  random object $W$ whose distribution we do not know but which we believe to be approximately  that of $Z$. After choosing a metric $d_\mathcal{H}(W, Z) := \sup_{h \in \mathcal{H}}\left|{\rm E}[h(W)]-{\rm E}[h(Z)]\right|$ for our approximation (where $\mathcal{H}$ is also a certain class of functions),
the first step in the Stein method consists in writing, for all $h \in \mathcal{H}$,
\begin{equation}\label{eq:dlEA}d_\mathcal{H}(W, Z)  = \sup_{h \in \mathcal{H}}\left|{\rm E}[\mathcal{T}(f_h,g)(W)]\right|\end{equation}
 with $f_h$ the solution of the  so-called \emph{Stein equation}
\begin{equation}\label{eq:eqst}\mathcal{T}(f_h,g)(x) = h(x)-{\rm E}[h(Z)].\end{equation}
The intuitive reason for which \eqref{eq:dlEA} is an interesting quantity to study is the following: $Z$ satisfies the rhs of~(\ref{eq:c1}), thus if the law of $W$ is close to that of $Z$, then \eqref{eq:c1} should be nearly satisfied and the rhs of \eqref{eq:dlEA} should be close to 0 for all $h$ such that $f_h \in \mathcal{H} \cap \mathcal{F}(g)$. Hence   $\left|{\rm E}[\mathcal{T}(f_h,g)(W)]\right|$ is an indicator of the $\mathcal{H}$-distance between  $W$ and $Z$. 

The secret behind the method is that not only is the intuition outlined in the previous paragraph correct, but also, as it turns out, the rhs of  \eqref{eq:dlEA} happens to be often  ``easier'' to bound, making   \eqref{eq:dlEA}  a good starting point for a wide family of stochastic approximation problems. 
Determining equations of the form   \eqref{eq:c1} for a given distribution $g$ is the crucial starting of this method.   For instance,  Stein \citep{S72}Ê  showed that \eqref{eq:c1} holds for the Gaussian with $\mathcal{T}(f, g)(x) = f'(x)-xf(x)$ and $\mathcal{F}(g)$ the class of differentiable functions on $\R$;  Chen \citep{C75} showed a similar relationship for the rate-$\lambda$ Poisson distribution, with $\mathcal{T}(f, g)(x) = f(x+1)-\lambda f(x) $ and $\mathcal{F}(g)$ the class of all bounded functions on $\Z$.
After identifying a suitable characterization, the usual methodology relies on three steps, namely (i)  solving  \eqref{eq:eqst} for all $h \in \mathcal{H}$, (ii)  deriving bounds -- the so-called \emph{magic factors} -- on the corresponding solutions, and (iii) applying the right tool (exchangeable pairs, zero- or size-biased distributions, truncation, etc.)  in order to obtain explicit bounds on the rhs of \eqref{eq:dlEA} through the bounds obtained in step (ii). We refer the reader to the recent survey \citep{R11} for an overview. 

 This method has been applied in a wide  number of problems. While the bulk of the (now vast) literature on this subject is devoted to Poisson and normal approximation problems, there have also been extensions towards other non standard densities, particularly so in recent years.      G\"otze and Tikhomirov, for instance,  use a characterization of the {semi-circular law} to obtain rates of convergence  for spectra of random matrices with martingale structure (see \citep{GT07}). Chatterjee, Fulman and R\"ollin use \emph{two} different  characterizations of the  {exponential distribution} to obtain general results for convergence towards an exponential distribution  (see \citep{CFR11}); they illustrate the applications of their methodology in a study of the spectrum of  graphs with a non-normal limit. In \citep{SDHR04}, Stein, Diaconis, Holmes and Reinert obtain a characterization  -- by means of what is now called the \emph{density approach} --  of all  {regular distributions} with a regular derivative (a function is regular if it is bounded and has at most countably many discontinuity points  on its support, see also \citep{S86} and \citep{D91});  they use this  in the analysis of simulations.  Eichelsbacher and L\"owe \citep{EL10} and Chatterjee and  Shao~\citep{CS10}   use  \citep{SDHR04}'s general characterizations to obtain general non-Gaussian approximation theorems, relevant for example in the field  of statistical mechanics.  Extensions  to a multivariate setting are also available for the multivariate Gaussian law (see, for instance,   \citep{GR96},   \citep{CM08}, or  \citep{RR09}).    There exists a uniform  treatment of the univariate discrete case, by means of Gibbs measures, which can be found in \citep{ER08}. In \citep{PRR11}Ê  a characterization of the Kummer-$U$ function is used to study degree asymptotics with rates for preferential attachment random graphs.  Extensions of the method to continuous time processes are currently the object of active research (see \citep{NPR10a},  \citep{NPR10b} or \citep{NP11} and the references therein). This list is of course not exhaustive, and the method has also been used for binomial, negative binomial, multinomial, gamma, $\chi^2$ and many other target distributions (see \citep{CGS10}). 

In this paper we will not address Stein's method \emph{per se}, but rather
concentrate on the characterizations   \eqref{eq:c1} which are known, in the literature, as \emph{Stein characterizations}. These  have, so far, never been the subject of a specific treatment in the literature, and  have always been introduced, through largely case-by-case arguments, as a means to an end rather than as an object of intrinsic interest. 
This  is perhaps explained by the nature of the different target distributions (discrete, continuous, bounded or unbounded support, etc.) which make it  complicated to try to unify all these characterizations under a single umbrella (in general different choices of target distributions will require imposing different combinations of restrictive assumptions).    The purpose of this  article  is to exploit the similarities between \emph{all} the characterizations discussed above  in order to show how \emph{all} these results can be seen as different instances of a unique phenomenon. 

As it turns out, not only does  our approach allow for (re-)obtaining the  characterizations  mentioned above  (as well as many others) but it also simplifies the resulting proofs and allows to identify clearly  the  minimal conditions on the target densities under which such characterizations hold. 
More importantly it opens new lines of research, and builds hitherto unsuspected  bridges between Stein's method and information theory. 

The outline of the paper is the following. In Section \ref{sec:heuristic} we discuss different Stein characterizations and use this discussion to provide the heuristic behind our approach. This heuristic is formalized   in Section \ref{sec:mainresult} where we also  prove our main result, Theorem \ref{theo}, which provides  a general and simple characterization theorem for a very broad family of (discrete and continuous, univariate and multivariate) distributions.  In Section \ref{sec:explicit} we illustrate the consequences of Theorem~\ref{theo} by providing  characterizations for important classes of  parametric distributions, namely the location and the scale families, as well as a characterization for discrete distributions. 
We apply our findings  in  a number of illustrative examples in Section \ref{sec:new}, and uncover a couple of unpublished  (to the best of our knowledge) characterizations.   In Section \ref{sec:furtherwork}, we discuss a couple potential applications of our results. 
Finally,  Appendix \ref{app:proofs} collects the more technical proofs.

\section{Stein characterizations}\label{sec:heuristic}

In this section we provide the heuristic behind our approach. The arguments are constructive: starting from the Gaussian distribution we generalize so as to obtain the weakest possible assumptions  for the most general result. 

\subsection{The density approach} \label{sec:heur_cont}

Let $\phi$ stand for the standard Gaussian density. In a seminal paper \citep{S72}, Charles Stein introduced the  relationship 
\begin{equation}\label{eq:SG} X\sim \mathcal{N}(0, 1) \Longleftrightarrow {\rm E}[f'(X)-Xf(X)] = 0 \mbox{ for all }Êf \in \mathcal{F}(\phi), \end{equation}
with $\mathcal{F}(\phi)$ the collection of all differentiable real  functions for which the expectation in~(\ref{eq:SG}) is defined. 
There exist many proofs of \eqref{eq:SG} (see, e.g., \citep{H78}, \citep{CS05} or \citep{NPR10a}). 
We opt to present a different argument which enjoys the advantage of being  transferable to virtually \emph{any} continuous target density. 

First  remark that $-x = \phi'(x)/\phi(x)$ so that equation \eqref{eq:SG} can be  equivalently rewritten 
\begin{align}h(x) \propto \phi(x)   
&  \Longleftrightarrow \int_\R\left(f'(x)+\frac{\phi'(x)}{\phi(x)}f(x)\right)h(x)dx = 0 \mbox{ for all }Êf \in \mathcal{F}(\phi),\nonumber \\
&  \Longleftrightarrow \int_\R\frac{\left(f\phi\right)'(x)}{ \phi(x)}  h(x)dx = 0 \mbox{ for all }Êf \in \mathcal{F}(\phi),\label{eq:SG2}
\end{align}
where $h:\R\to \R^+$ is some density.  The sufficient  condition in \eqref{eq:SG2}  is immediate via integration by parts (the implicit boundary conditions on $f\in \mathcal{F}(\phi)$ ensuring that the constant term vanishes). To prove the necessity,  choose for $A \subset \R$ a test function $f_A \in \mathcal{F}(\phi)$ that satisfies the differential equation 
\begin{equation}\label{eq:steq} \frac{\left(f_A\phi\right)'(x)}{ \phi(x)}   = \mathbb{I}_{A}(x) - \int_A \phi(y)dy\end{equation}
for   $\mathbb{I}_{A}$ the indicator of  $A$. If such a $f_A$ exists and if it belongs to $\mathcal{F}(\phi)$, then \eqref{eq:SG2} guarantees that 
$$ \int_Ah(x)dx = \int_A\phi(x)dx \mbox{ for all } A \subset \R$$  and thus   $h = \phi$. Of course  \eqref{eq:steq}   is easily solved explicitly,   yielding the candidate solution
\begin{equation}\label{eq:steqsol}f_A(x) = \frac{1}{\phi(x)}\int_{-\infty}^x \phi(z) \left(\mathbb{I}_{A}(z) -  \int_A \phi(y)dy\right)dz,\end{equation}
a function which, for all $A \subset \R$,  is readily shown to satisfy all the requirements for belonging to $\mathcal{F}(\phi)$ (see, e.g., \citep{CS05}). Hence   the result holds. 
Now note that the above argument relies nowhere on specific properties of the Gaussian $\phi$, but rather only on boundary and integrability conditions implicit in the definition of $\mathcal{F}(\phi)$ and in the solution \eqref{eq:steqsol}. It therefore suffices to replace $\phi$ by some generic density $g$ in all the above arguments and to work out conditions on $g$ so that everything runs smoothly in order to deduce, from  \eqref{eq:SG}, a general characterization theorem for continuous distributions. 

To the best of our knowledge such a general characterization result was presented for the first time  in \citep{SDHR04} under a slightly different form;  the only earlier similar attempt we have found is provided  in \citep{S01} where a construction of Stein operators  for  Pearson and Ord families of distributions is provided. Stein's \citep{SDHR04} result  is now known in the literature as the \emph{density approach}
 and  allows for recovering 
the Gaussian characterization \eqref{eq:SG}, the exponential characterization from \citep{CFR11} or  the following two examples (which are also provided in  \citep{SDHR04}).

\begin{example}\label{ex:1} Let $-\infty<a<b<\infty$.  A random variable $Z$ is $U[a, b]$ if and only if ${\rm E}[f'(Z)] = f(b^-) - f(a^+)$ for all differentiable  functions $f$.
\end{example}
\begin{example}\label{ex:exp} Let $\lambda>0$.  A random variable $Z$ is $Exp(\lambda)$ if and only if  ${\rm E}[f'(Z)-\lambda f(Z)] = -\lambda f(0^+)$ for all differentiable functions $f$.
\end{example}

The denomination \emph{density approach} is to be considered in analogy with the {\it generator approach} due to Barbour \citep{B88} and G\"otze \citep{G91}.

\subsection{Location-based and scale-based characterizations}

There   exist a number of outstanding  characterizations which \emph{cannot} be written in the form  \eqref{eq:SG2} such as \mbox{e.g.} those for discrete distributions or  for the semi-circular distribution. 
%
%
For instance,  in \citep{CFR11},  a version of Stein's method for exponential approximation is developed,  the arguments relying on two characterizations of the exponential distribution. The first, provided in   Example \ref{ex:exp},  is an instance of the density approach. The second is given by
\begin{equation}\label{eq:exp2} X\sim Exp(1)\Longleftrightarrow{\rm E}[Xf'(X)-(X-1)f(X)] = 0\end{equation}
for all  $f \in \mathcal{F}_2(Exp(1))$ a ``sufficiently large class'' of functions. 
This characterization is clearly not a consequence of the density approach. 
We nevertheless claim that \eqref{eq:exp2} stems from the same origin as the characterization in  Example~\ref{ex:exp}; to see this it is necessary to  re-interpret these results in terms of concepts inherited from a statistical point of view.

First recall how \eqref{eq:SG2} was deduced by replacing the linear term $x$ in \eqref{eq:SG} with the ratio $-g'/g$. This ratio  is a familiar object in statistics: it is the \emph{score function} $\varphi(x-\mu_0) = \left. (\partial_\mu g(x-\mu)\right|_{\mu=\mu_0})/g(x-\mu_0)$, evaluated at $\mu_0=0$, associated with the \emph{location} parameter $\mu$ of a location family $g(x-\mu)$ of distributions. Here $\partial_\mu$ stands for the derivative in the sense of distributions \mbox{w.r.t.} $\mu$. With this parametric notation in hand,  the characterization  can be rewritten as 
\begin{equation}\label{eq:loc1}
X\sim g(\cdot-\mu_0)\Longleftrightarrow{\rm E}\left[\frac{\left. \partial_\mu (f(X-\mu)g(X-\mu))\right|_{\mu=\mu_0}}{g(X-\mu_0)}\right]=0 
\end{equation}
for all $f\in\mathcal{F}(g;\mu_0)(\supset \mathcal{F}(g))$  a sufficiently large class of functions depending on both $g$ and $\mu_0$.  This shows how the \emph{density approach}  can be seen as a special instance (for $\mu_0=0$) of what we  will henceforth call the   \emph{location-based characterization}   \eqref{eq:loc1}.

Next reconsider equation \eqref{eq:exp2}.  For a given $Exp(\sigma)$  distribution, the parameter $\sigma$ is generally interpreted as a  \emph{scale} parameter.  Writing out the argument of the expectation in the rhs of (\ref{eq:loc1}) in terms of a scale parameter of a scale family $\sigma g(\sigma x)$  of distributions leads to ($\partial_\sigma$ denotes the  derivative in the sense of distributions \mbox{w.r.t.}~$\sigma$)
\begin{eqnarray*}
\frac{\partial_\sigma(f(\sigma x)\sigma g(\sigma x))\left.\right|_{\sigma=\sigma_0}}{\sigma_0 g(\sigma_0x)}&=&
xf'(\sigma_0x)+\frac{1}{\sigma_0}f(\sigma_0x)+f(\sigma_0x)x\frac{g'(\sigma_0x)}{g(\sigma_0x)}\\
&=&xf'(\sigma_0x)+f(\sigma_0x)\left(\frac{1}{\sigma_0}+x\frac{g'(\sigma_0x)}{g(\sigma_0x)}\right).
\end{eqnarray*}
For $g$ the density of an exponential distribution and $\sigma_0=1$, the latter equality corresponds to $xf'(x)+f(x)(1-x)$, which is the argument of the expectation in~(\ref{eq:exp2}) (note that for an exponential distribution, the support does not depend on the scale parameter, hence no indicator function needs to be differentiated). Thus, the second characterization of the $Exp(1)$ given in \citep{CFR11} can be viewed as a special instance (for $\sigma_0=1$) of what we  will call  a \emph{scale-based characterization} which, in its most general form, reads
\begin{equation}\label{eq:scale1}
X\sim \sigma_0g(\sigma_0\cdot)\Longleftrightarrow{\rm E}\left[\frac{\left. \partial_\sigma (f(\sigma X)\sigma g(\sigma X))\right|_{\sigma=\sigma_0}}{\sigma_0g(\sigma_0X)}\right]=0
\end{equation}
for all  $f\in\mathcal{F}(g;\sigma_0)$ a sufficiently large class of functions depending on both $g$ and $\sigma_0$.

The location- and scale-based characterizations provided above  do not, however,  cover Chen's characterization of the Poisson distribution, to cite but this well-known example.  Moreover, upon further thought,  there is no intuitive justification which would explain why only location and scale parameters should  play a special role; the tail parameter of a Student distribution or the upper and lower bounds of a uniform distribution over some interval $[a,b]$ should also be allowed to  play a crucial role in such characterizations, as well as, e.g., the parameter $\lambda$ of the Poisson distribution.
As it turns out, there exists a much neater and efficient general framework in which both the above ``general'' results turn out to be straightforward  particular cases. 

\subsection{A general characterization result}\label{sub:gcr}

In this section we fix, for simplicity,  $\mu_0=0$ and $\sigma_0=1$. Na\"ively exploiting the similarities between  \eqref{eq:loc1} and \eqref{eq:scale1}  encourages us to propose the following general conjecture.

 \
 
 \noindent {\bf Conjecture 1.} \emph{Let $g(x; \theta)$ be  a parametric family of densities with parameter~$\theta$. Suppose that $g(x; \theta)$  satisfies a number of  regularity conditions.  Fix a value $\theta_0$ of $\theta$ and denote by $\partial_\theta$ the derivative in the sense of distributions \mbox{w.r.t.} $\theta$. 
Then 
\begin{equation}\label{eq:gen1}
X \sim  g(\cdot;\theta_0)\Longleftrightarrow {\rm E}\left[ \frac{\partial_\theta(f(X;\theta)g(X;\theta))\left.\right|_{\theta=\theta_0}}{g(X;\theta_0)}\right]=0\end{equation} 
 for all $f\in\mathcal{F}(g;\theta_0)$ a sufficiently large class of functions depending on both $g$ and $\theta_0$.
}

\

This Conjecture, if true, would enjoy several advantages: all kinds of parameters $\theta$ could appear, and no difference would be made between the continuous and the discrete case. While promising, the main drawback of \eqref{eq:gen1} consists in the fact that  the conditions on the target density $g$,  as well as the structure of the family of test functions $\mathcal F(g;\theta_0)$ under which the Conjecture holds true,  remain mysterious. In order to clarify this issue, one final argument needs to be invoked, the origin of which lies, once again, in a statistical approach to such identities. 

Let  $g(x; \theta)$ be as in the Conjecture above. A classical result in likelihood theory states  that, under   regularity conditions, the expectation of the score function $ \partial_\theta g(x; \theta) / g(x; \theta)$ vanishes. The proof is very simple. Let $(\mathcal{X},m_\mathcal{X})$ be a measure space (e.g., $\R$ equipped with the Lebesgue measure or $\Z$ equipped with the counting measure). 
Since $\int_{\mathcal{X}} g(x;\theta)dm_\mathcal{X}(x)=1$, differentiating \mbox{w.r.t.} $\theta$ on both sides yields $\int_\mathcal{X} \partial_\theta g(x;\theta)dm_\mathcal{X}(x)=0$, provided that the derivative and the integral are interchangeable. This immediately shows that the expectation under $g(x;\theta)$ of $ \partial_\theta g(x; \theta) / g(x; \theta)$ equals zero. 
Now, under $g(\cdot;\theta_0)$, the rhs of (\ref{eq:gen1}) corresponds to $\int_\mathcal{X}\partial_\theta(f(x;\theta)g(x;\theta))\left.\right|_{\theta=\theta_0}dm_\mathcal{X}(x)=0$, which, under the condition of interchangeability of derivatives w.r.t. $\theta$ and integration w.r.t. $x$, can be rewritten as $\partial_\theta(\int_\mathcal{X}f(x;\theta)g(x;\theta)dm_\mathcal{X}(x))\left.\right|_{\theta=\theta_0}=0$. Thus, by analogy with the proof of the likelihood-based result, we see that, in order to belong to the class $\mathcal{F}(g;\theta_0)$, a test function $f$ should satisfy the following natural three conditions in some neighborhood $\Theta_0$ of $\theta_0$:
\begin{enumerate}[(i)]
\item there exists a real constant $c_f$ such that $\int_\mathcal{X} f(x;\theta)g(x;\theta)dm_\mathcal{X}(x)=c_f$ for all $\theta\in\Theta_0$;\vspace{0.1cm}
\item the mapping $\theta\mapsto f(x;\theta)g(x;\theta)$ is differentiable over $\Theta_0$;
\item the differentiation \mbox{w.r.t.} $\theta$ and the integral sign are interchangeable for all $\theta\in\Theta_0$.
\end{enumerate}
These conditions will be made more precise in Definition~\ref{def} of the next section. As we shall see, the first of these conditions yields the form of the candidate functions $f(x;\theta)$ (for instance $x \mapsto f(x-\theta)$ in the location case and $x\mapsto f(\theta x)$ in  the scale case) and the second and third explain the sometimes complicated conditions imposed on the test functions in the relevant literature.

   As a conclusion we stress an important fact:  nowhere in the above argument do we rely on the target density to be continuous. As we will show in the following section, the heuristic outlined above holds irrespective of the nature of the target density, and \eqref{eq:gen1} carries, as particular instances, the known characterizations for the Gaussian, the uniform, the exponential,  the semi-circular, the Poisson and the geometric, to cite but these. 

\section{Characterizations in terms of a parameter of interest}\label{sec:mainresult} In this section we present the main result of this paper, Theorem~\ref{theo}, which provides a unified framework for constructing Stein characterizations -- by means of a characterizing class of test functions and a characterizing operator -- for univariate, multivariate,  discrete and continuous distributions. As announced in the previous section, we show that all these results allow for an interpretation in terms of a {\em parameter of interest }Êof the target distribution.  
\subsection{Notations and definitions}

We first need to clearly identify the notations and vocabulary which will be used from now on. Throughout, we let 
 $k,p\in\N_0$ and consider the two measure spaces $(\mathcal{X},\mathcal{B}_{\mathcal{X}},m_{\mathcal{X}})$ and $(\Theta,\mathcal{B}_\Theta,m_\Theta)$,  where $\mathcal{X}$ is either  $\R^k$ or $\Z^k$, where $\Theta$ is a subset of $\R^p$ whose interior is non-empty, 
where $m_{\mathcal{X}}$ is either the Lebesgue measure or the counting measure, depending on the nature of $\mathcal{X}$,  where $m_\Theta$ is the Lebesgue measure, and where $\mathcal{B}_{\mathcal{X}}$ and $\mathcal{B}_\Theta$ are the corresponding $\sigma$-algebras.  In this setup we disregard the case of discrete parameter spaces  (as in, e.g., the discrete uniform); such distributions are shortly addressed in Remark \ref{rem:disc_par} at the end of the current section. 


Consider a couple $(\mathcal{X}, \Theta)$ equipped with the corresponding $\sigma$-algebras and measures. We say that the measurable function $g:\mathcal{X}\times \Theta \rightarrow\R^+$ forms a family of  $\theta$-\emph{parametric densities}, denoted by $g(\cdot; \theta)$, if $\int_\mathcal{X}g(x;\theta)dm_\mathcal{X}(x)=1$ for all $\theta \in \Theta$.  In this case we call $\theta$ the  parameter of interest for $g$.  
 When $\mathcal{X}=\R^k$, corresponding to the absolutely continuous case, the mapping $x \mapsto g(x;\theta)$ is, for all $\theta \in \Theta$, a probability density function  evaluated at the point $x\in\R^k$. When $\mathcal{X}=\Z^k$, corresponding to the discrete case, $g(x;\theta)$ is the probability mass associated with $x\in\Z^k$ and $g(\cdot ;\theta)$ therefore maps $\Z^k$ onto $[0,1]$. This unified terminology will allow us to  treat absolutely continuous and discrete distributions  in one common framework. For the sake of simplicity,  we rule out mixed distributions. 

\begin{example} $\theta$-{parametric densities} are ubiquitous in probability and statistics. Taking $g: \Z \times \R^+_0\to [0, 1]: (x, \lambda) \mapsto e^{-\lambda}\lambda^x/x!\,\mathbb{I}_\N(x)$, where $\mathbb{I}_A(\cdot)$ stands for  the indicator function of some set $A\in \mathcal{B}_{\mathcal{X}}$,  we see  the density of a   Poisson $\mathcal{P}(\lambda)$ distribution   as a $\lambda$-parametric density. Taking  $g: \R \times (\R\times \R_0^+)\to \R^+: (x,(\mu, \sigma)') \mapsto (2\pi\sigma^2)^{-1/2} e^{-(x-\mu)^2/(2\sigma^2)}$, we see the density of a  Gaussian $\mathcal{N}(\mu, \sigma)$ distribution  as a $(\mu, \sigma)$-parametric density. If, in the Gaussian case,  the scale is known (and set to $\sigma_0$),  one is then only interested in the location parameter $\mu$. Taking $\tilde{g}: \R \times \R \to \R^+: (x,\mu) \mapsto \tilde{g}(x; \mu) = g(x; (\mu, \sigma_0)')$,  we see the density of a  Gaussian $\mathcal{N}(\mu, \sigma_0)$ distribution  as a $\mu$-parametric density. Likewise, one can see the density of a uniform  $U[a, b]$ distribution as an $(a, b)$-parametric density, an $a$-parametric density or a $b$-parametric density.  In general, there are infinitely many ways to write the density of any given probability distribution  as a $\theta$-parametric density for any given $\theta$. See for instance, on this issue, the discussion on the so-called {natural parameters} of the exponential family in \citep{LC98}. 
\end{example}

Fix a couple $(\mathcal{X}, \Theta)$ as above, endowed with their respective $\sigma$-algebras and measures. Throughout this paper, the densities we shall work with all belong to the  class $\mathcal{G} := \mathcal{G}(\mathcal{X}, \Theta)$ of  $\theta$-parametric densities for which the mapping $\theta \mapsto g(\cdot ; \theta)$ is  differentiable in the sense of distributions. Such distributions may have a bounded support possibly depending on the parameter $\theta$; we will denote this support by $S_\theta:=S_\theta(g)$, be it dependent on $\theta$ or not. 

With this in hand, we are  ready to define the two fundamental concepts of this paper. These (a class of functions and an operator) mirror notions already present in the literature on Stein characterizations.  

\begin{definition}\label{def}
Let $\theta_0$ be an interior point of $\Theta$ and let $g \in \mathcal{G}$. 
We define the  class $\mathcal{F}(g;\theta_0)$ as the collection of {\em test functions} $f:\mathcal{X}\times\Theta\rightarrow\R$ such that the following three conditions are satisfied in some  neighborhood  $\Theta_0\subset \Theta$ of $\theta_0$. \vspace{0.2cm}

\noindent  Condition (i) : there exists   $c_f \in \R$   such that $\int_\mathcal{X}f(x;\theta)g(x;\theta)dm_\mathcal{X}(x)=c_f$  for all $\theta\in\Theta_0$.  \vspace{0.2cm}

\noindent Condition (ii) : the mapping $\theta \mapsto f(\cdot ; \theta)g(\cdot; \theta)$ is  differentiable in the sense of distributions over $\Theta_0$.
\vspace{0.2cm}

\noindent Condition (iii) : there exist $p$ $m_\mathcal{X}$-integrable functions $h_i: \mathcal{X} \to \R^+$, $i=1, \ldots, p$,  such that  $|\partial_{\theta_i}(f(x;\theta)g(x;\theta))|\leq h_i(x)$ over $\mathcal{X}$ for all $i=1,\ldots,p$ and for all $\theta\in\Theta_0$.

\end{definition}

The three conditions in Definition \ref{def} are to be compared to the three conditions discussed at the end of Section \ref{sub:gcr}. 

\begin{definition}\label{def2}
Let $\theta_0$ be an interior point of $\Theta$. Also let $g$ and $\mathcal{F}(g; \theta_0)$ be as above.  We define the  {\em Stein operator} $\mathcal{T}_{\theta_0}:=\mathcal{T}_{\theta_0}(\cdot, g):\mathcal{F}(g;\theta_0) \rightarrow\mathcal{X}^*$   as
\begin{equation}\label{eq:operator} \mathcal{T}_{\theta_0}(f, g)(x)=  \frac{\nabla_\theta(f(x;\theta)g(x;\theta))|_{\theta=\theta_0}}{g(x;\theta_0)}.
\end{equation}
\end{definition}

The operator defined by  \eqref{eq:operator}, inspired  by the rhs of \eqref{eq:gen1},  requires some comments. If the support of $g(\cdot; \theta)$ is  $\mathcal{X}$ itself, then the operator is obviously well-defined everywhere. If, on the contrary,  the density $g(\cdot; \theta)$ has  support $S_\theta \subset \mathcal{X}$, then there is an ambiguity which we need to avoid. To this end we adopt the convention that, whenever an expression involves the division by an indicator function $\mathbb{I}_A$ for some $A\in \mathcal{B}_{\mathcal{X}}$,  we are, in fact, multiplying the expression by the said indicator function. 
With this convention, writing out the operator in full  (whenever the gradient $\left.\nabla_\theta (f(x; \theta))\right|_{\theta=\theta_0}$ is well-defined on $\mathcal{X}$) reads
$$ \mathcal{T}_{\theta_0}(f, g)(x) = \left(\nabla_\theta (f(x; \theta)) |_{\theta=\theta_0}  + f(x; \theta_0) \dfrac{\nabla_\theta (g(x; \theta))|_{\theta=\theta_0}}{g(x; \theta_0)} \right)\mathbb{I}_{S_{\theta_0}}(x).$$
Our convention not only guarantees that the Stein operator is well-defined but also that,   for  any test function $f$,  the support  of $\mathcal{T}_{\theta_0}(f, g)(x)$ is included in  $S_{\theta_0}$. This convention was implicit throughout the discussion in the heuristic section. As already mentioned there, the usage of  derivatives in the sense of distributions of $g$ with respect to $\theta$ implies  also taking derivatives of indicator functions whenever $S_\theta$ depends on $\theta$.  

\begin{example}\label{ex:2}

(i)  Let $\mathcal{X}=\R$, $\Theta=\R$ and $g(x;\mu)=(2\pi)^{-1/2}e^{-(x-\mu)^2/2}$, the density of a univariate normal $\mathcal{N}(\mu,1)$ distribution. Clearly, $g$ belongs to $\mathcal{G}$ for all $\mu \in \R$ and its support $S_\mu=\R$ is independent of $\mu$. Fix $\mu_0=0$ and consider functions of the form $f:\R\times\R\rightarrow\R:(x,\mu)\mapsto f(x; \mu):= f_0(x-\mu)$, where $f_0:\R\rightarrow\R$ is chosen such that $f\in \mathcal{F}(g; 0)$. Restricting the operator $\mathcal{T}_{0}$ to the collection of $f$'s of this form, it becomes
$$\mathcal{T}_{0}(f, g)(x)=-f_0'(x)+xf_0(x).$$
(ii)   Let $\mathcal{X}=\Z$, $\Theta=\R_0^+$ and $g(x;\lambda)=e^{-\lambda}{\lambda ^x}/{x!}\, \mathbb{I}_{\N}(x)$, the density of a Poisson $\mathcal{P}(\lambda)$ distribution. Clearly, $g$ belongs to $\mathcal{G}$ for all $\lambda \in \R_0^+$ and its support $S_\lambda=\N$ is independent of $\lambda$. Fix $\lambda=\lambda_0$ 
 and consider functions of the form $f:\Z\times\R_0^+\rightarrow\R:(x, \lambda)\mapsto f(x; \lambda) := e^ \lambda[{\lambda}f_0(x+1)/(x+1)-f_0(x)]$, where $f_0 :\Z\rightarrow\R$ is chosen such that $f\in \mathcal{F}(g; \lambda_0)$.  Restricting the operator $\mathcal{T}_{\lambda_0}$ to the collection of $f$'s of this form, it becomes
$$\mathcal{T}_{\lambda_0}(f, g)(x)=e^{\lambda_0}\left(f_0(x+1)-\frac{x}{\lambda_0}f_0(x)\right)\mathbb{I}_{\N}(x).$$
\end{example}


Among densities $g\in \mathcal{G}$, those which satisfy the following (local) regularity assumption at a given interior point $\theta_0\in \Theta$ will play a particular role.  \\

\noindent Assumption A :  there exists a rectangular bounded neighborhood $\Theta_0\subset \Theta$ of $\theta_0$ and a $m_\mathcal{X}$-integrable function $h:\mathcal{X}\rightarrow\R^+$ such that $g(x;\theta)\leq h(x)$ over $\mathcal{X}$ for all $\theta\in\Theta_0$.\\

\noindent This assumption is weak, and is satisfied for example as soon as the target density is bounded over its support.  It does, nevertheless, exclude some well-known distributions such as, e.g., the arcsine distribution. 
\subsection{Main result}

With these notations, we are  ready to state and prove our general characterization theorem.

\begin{theorem}\label{theo} 
Let $g \in \mathcal{G}$, let $Z_{\theta}$ be distributed according to $g(\cdot; \theta)$, and  
let $X$ be a random vector taking values on $\mathcal{X}$. 
Fix an interior point $\theta_0 \in  \Theta$.   Then the following two assertions hold. 
\begin{itemize}
\item[(1)] If $X\stackrel{\mathcal{L}}{=} Z_{\theta_0}$, then ${\rm E}[\mathcal{T}_{\theta_0}(f,g)(X)]=0$ for all $f\in\mathcal{F}(g;\theta_0)$.
\item[(2)]  If $g$ also satisfies Assumption A at $\theta_0$ and if ${\rm E}[\mathcal{T}_{\theta_0}(f,g)(X)]=0$ for all $f\in\mathcal{F}(g;\theta_0)$, then \begin{equation}\label{eq:conc}X\,|\,X\in S_{\theta_0}\stackrel{\mathcal{L}}{=} Z_{\theta_0}.\end{equation}
\end{itemize}
\end{theorem}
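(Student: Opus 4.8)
The plan is to prove the two assertions separately, both by direct computation using the defining properties of the test-function class $\mathcal{F}(g;\theta_0)$.

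For part (1), I would argue as follows. Assume $X\stackrel{\mathcal{L}}{=}Z_{\theta_0}$, so that $\mathrm{E}[\mathcal{T}_{\theta_0}(f,g)(X)] = \int_{\mathcal{X}}\mathcal{T}_{\theta_0}(f,g)(x)\,g(x;\theta_0)\,dm_{\mathcal{X}}(x)$. By the convention on division by indicator functions and the definition \eqref{eq:operator} of the operator, the integrand equals $\nabla_\theta(f(x;\theta)g(x;\theta))|_{\theta=\theta_0}$, integrated over $\mathcal{X}$ (the indicator $\mathbb{I}_{S_{\theta_0}}$ is absorbed since $g(x;\theta_0)=0$ off $S_{\theta_0}$). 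Now Condition (iii) provides the dominating functions $h_i$ that allow interchanging $\nabla_\theta$ and $\int_{\mathcal{X}}\cdot\,dm_{\mathcal{X}}$ (a standard differentiation-under-the-integral-sign argument component by component, valid on the neighborhood $\Theta_0$), and Condition (ii) guarantees the relevant differentiability. Hence $\int_{\mathcal{X}}\nabla_\theta(f(x;\theta)g(x;\theta))|_{\theta=\theta_0}\,dm_{\mathcal{X}}(x) = \nabla_\theta\big(\int_{\mathcal{X}}f(x;\theta)g(x;\theta)\,dm_{\mathcal{X}}(x)\big)\big|_{\theta=\theta_0} = \nabla_\theta(c_f)|_{\theta=\theta_0}=0$ by Condition (i). This proves (1).

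For part (2), the strategy is the usual ``test against indicators'' device adapted to the present setting. Assume $\mathrm{E}[\mathcal{T}_{\theta_0}(f,g)(X)]=0$ for all $f\in\mathcal{F}(g;\theta_0)$, and assume Assumption A. Let $P_X$ denote the law of $X$; I want to show $P_X(A\cap S_{\theta_0}) = P_X(S_{\theta_0})\int_A g(y;\theta_0)\,dm_{\mathcal{X}}(y)$ for every measurable $A\subset\mathcal{X}$, which is \eqref{eq:conc}. The key is to exhibit, for each such $A$, a test function $f_A\in\mathcal{F}(g;\theta_0)$ whose associated operator output is $\mathcal{T}_{\theta_0}(f_A,g)(x) = \big(\mathbb{I}_A(x) - \int_A g(y;\theta_0)\,dm_{\mathcal{X}}(y)\big)\mathbb{I}_{S_{\theta_0}}(x)$, mimicking \eqref{eq:steq}--\eqref{eq:steqsol} from the heuristic. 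Solving the relevant first-order linear equation (ODE in the continuous case, recursion in the discrete case) gives the candidate
\[
f_A(x;\theta_0)g(x;\theta_0) = \int_{-\infty}^{x}\Big(\mathbb{I}_A(z) - \textstyle\int_A g(y;\theta_0)\,dm_{\mathcal{X}}(y)\Big)g(z;\theta_0)\,dm_{\mathcal{X}}(z),
\]
(with the obvious multivariate/discrete analogues), and one then extends $f_A$ off $\theta_0$ so that Conditions (i)--(iii) hold on a neighborhood $\Theta_0$ — here Assumption A is what supplies the integrable dominating function needed for Condition (iii), and Condition (i) holds with $c_{f_A}=0$ by construction. Feeding $f_A$ into the hypothesis yields $\int_{\mathcal{X}}\big(\mathbb{I}_A(x)-\int_A g(y;\theta_0)\,dm_{\mathcal{X}}(y)\big)\mathbb{I}_{S_{\theta_0}}(x)\,dP_X(x)=0$, i.e. $P_X(A\cap S_{\theta_0}) = \big(\int_A g(y;\theta_0)\,dm_{\mathcal{X}}(y)\big)P_X(S_{\theta_0})$, and taking $A=S_{\theta_0}$ shows the second factor equals $1/P_X(S_{\theta_0})\cdot P_X(S_{\theta_0})$ consistently, giving the conditional-law statement.

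The main obstacle I anticipate is the construction in part (2): one must verify carefully that the explicitly-solved $f_A$ can be extended to a genuine element of $\mathcal{F}(g;\theta_0)$ — in particular that it satisfies Conditions (ii) and (iii) on a whole neighborhood $\Theta_0$, not merely at $\theta_0$ — and that the boundary/decay behaviour at the edges of $S_{\theta_0}$ makes the implicit boundary terms vanish. This is where the structure of $g$ near $\partial S_{\theta_0}$ and Assumption A must be used delicately, and it is presumably why the technical details are deferred to Appendix \ref{app:proofs}. The differentiation-under-the-integral step in part (1) is routine by comparison, being a direct application of the dominated-convergence/mean-value argument licensed by Conditions (ii)--(iii).
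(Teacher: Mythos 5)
Your part (1) is exactly the paper's argument: Condition (ii) gives differentiability, Condition (iii) licenses the interchange of $\nabla_\theta$ and $\int_{\mathcal X}\cdot\,dm_{\mathcal X}$, and Condition (i) makes the resulting derivative $\nabla_\theta(c_f)=0$. No issues there.

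Part (2) contains a genuine gap in the construction of the test function. The Stein equation you need to solve is $\partial_\theta\bigl(f_A(x;\theta)g(x;\theta)\bigr)\big|_{\theta=\theta_0}=l_A(x)\,g(x;\theta_0)$, which for each fixed $x$ is a condition on the \emph{$\theta$-derivative} of $f_A(x;\cdot)g(x;\cdot)$ at $\theta_0$; it is not a first-order ODE (or recursion) in $x$, because the operator $\mathcal{T}_{\theta_0}$ of Definition \ref{def2} differentiates in the parameter, not in the state variable. Your candidate $f_A(x;\theta_0)g(x;\theta_0)=\int_{-\infty}^{x}\bigl(\mathbb{I}_A(z)-\int_Ag(y;\theta_0)dm_{\mathcal X}(y)\bigr)g(z;\theta_0)dm_{\mathcal X}(z)$ prescribes $f_A$ only at the single point $\theta=\theta_0$, and the value there carries no information whatsoever about $\partial_\theta(f_Ag)|_{\theta=\theta_0}$; the entire content of the construction is pushed into the unspecified ``extension off $\theta_0$,'' which is precisely the hard part. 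The $x$-antiderivative formula is the density-approach solution, and it only becomes relevant after one has an exchanging operator converting $\partial_\theta$ into $\partial_x$ (the paper's Section \ref{sec:recipe} and equation \eqref{eq:exch}) --- extra structure (location, scale, discrete families) that the general theorem does not assume.

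The paper's proof instead integrates over the \emph{parameter}: it sets
\begin{equation*}
f_A(x;\theta)=\frac{1}{g(x;\theta)}\int_{\theta_0}^{\theta}l_A(x;u,\theta)\,g(x;u)\,dm_\Theta(u),
\qquad l_A(x;u,\theta)=\bigl(\mathbb{I}_A(x)-{\rm P}(Z_u\in A\,|\,Z_u\in S_\theta)\bigr)\mathbb{I}_{S_\theta}(x),
\end{equation*}
so that $\partial_t(f_A(x;t)g(x;t))|_{t=\theta}=l_A(x;\theta,\theta)g(x;\theta)+H(x;\theta)$ with $H(\cdot;\theta_0)=0$. Condition (i) then holds with $c_{f_A}=0$ by Fubini, since the inner $x$-integral of $l_A(\cdot;u,\theta)g(\cdot;u)$ vanishes by construction of the conditional probability; Condition (iii) follows from Assumption A together with the explicit bound on $H$ given in the Appendix. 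Note also that the paper must treat $p>1$ separately by summing coordinate-wise constructions $f_A^{(p)}=\sum_{j=1}^pf_A^j$ along the axes of a rectangular neighborhood, a step your sketch does not address. To repair your proposal you would need to replace the $x$-antiderivative by this parameter-integral (or supply an explicit $\theta$-dependent family realizing the required $\theta$-derivative and verify Conditions (i)--(iii) for it), since as written no element of $\mathcal{F}(g;\theta_0)$ has actually been produced.
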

The first statement  in Theorem \ref{theo} is standard; it implies that in order to obtain a Stein operator for a given $\theta$-parametric density $g$ at a point $\theta_0$, it suffices to find a collection of functions $f$ such that the conditions in Definition~\ref{def} hold and then apply the operator given in Definition \ref{def2}. As we will show in the next section, this  allows for recovering many well-known Stein operators, and for constructing many more. The second statement is also quite standard whenever $S_{\theta_0} = \mathcal{X}$. If $S_{\theta_0} \subset  \mathcal{X}$, then things are slightly more tricky. Indeed, in this case, equation~\eqref{eq:conc} does not imply that the law of $X$ is necessarily that of $Z _{\theta_0}$, but  rather that if the distribution of $X$  has support $S_{\theta_0}$ and if $X$ satisfies ${\rm E}[\mathcal{T}_{\theta_0}(f,g)(X)]=0$ (on $S_{\theta_0}$ by definition of $\mathcal{T}_{\theta_0}(f,g)$) for all $f\in \mathcal{F}(g; \theta_0)$, then $X$ is distributed according to $g(\cdot; \theta_0)$. This is in accordance with all other results of this form. 

\begin{proof} 

(1) Since Condition~(iii) allows for differentiating \mbox{w.r.t.} $\theta$ under the integral in Condition~(i) and since differentiating w.r.t. $\theta$ is allowed thanks to Condition~(ii), the claim follows immediately.

(2) First suppose that $p=1$, and fix $\Theta_0\subset \Theta$, a bounded (rectangular) neighborhood  of $\theta_0$ on which $g$ satisfies  Assumption A at $\theta_0$. Define, for  $A\in\mathcal{B}_\mathcal{X}$,  the mapping 
\begin{equation}\label{eq:stein_sol}
f_A:\mathcal{X}\times\Theta_0\rightarrow\R :(x,\theta)\mapsto \frac{1}{g(x;\theta)}\int_{\theta_0}^\theta l_A(x;u, \theta)g(x;u)dm_\Theta(u)
\end{equation}
with 
$$l_A(x;u, \theta):=\left(\mathbb{I}_A(x)-{\rm P}(Z_u\in A\, | \, Z_u\in S_\theta)\right)\mathbb{I}_{S_\theta}(x),$$ 
where 
$${\rm P}(Z_u\in B)=\int_\mathcal{X}\mathbb{I}_B(x)g(x;u)dm_\mathcal{X}(x)$$
 for $B \in \mathcal{B}_{\mathcal{X}}$.  Note that, for the event $[Z_u\in S_\theta]$ to have a non-zero probability, it is crucial to work in a neighborhood $\Theta_0$ rather than in $\Theta$; clearly, this event is always true when $S_\theta$ does not depend on $\theta$. To see that $f_A$ belongs to $\mathcal{F}(g;\theta_0)$, first note that 
\begin{eqnarray}
\int_\mathcal{X}f_A(x;\theta)g(x;\theta)dm_\mathcal{X}(x)&=&\int_\mathcal{X}\int_{\theta_0}^\theta l_A(x;u, \theta)g(x;u)dm_\Theta(u)dm_\mathcal{X}(x)\nonumber\\
&=&\int_{\theta_0}^\theta\int_\mathcal{X} l_A(x;u, \theta)g(x;u)dm_\mathcal{X}(x)dm_\Theta(u),\nonumber 
\end{eqnarray}
where the last equality  follows from Fubini's theorem, which can be applied for all  $\theta\in \Theta_0$, since  in this case there exists a constant $M$ such that 
$$\int_{\theta_0}^\theta\int_\mathcal{X}|l_A(x;u,\theta)|g(x;u)dm_\mathcal{X}(x)dm_\Theta(u)\leq2|\theta-\theta_0|\le M$$
for all $\theta \in \Theta_0$. We also have, by definition of $l_A$,  
\begin{align*}  & \int_\mathcal{X} l_A(x;u, \theta)g(x;u)dm_\mathcal{X}(x)\\
				& \quad \quad \quad  = {\rm P}(Z_u\in A\cap S_\theta)-{\rm P}\left(Z_u\in A\, | \, Z_u \in S_\theta\right){\rm P}(Z_u \in S_\theta)\\
				& \quad \quad \quad = 0.\end{align*}
Hence $f_A$ satisfies Condition~(i). Condition~(ii) is easily checked. Regarding Condition~(iii),  one sees  that 
\begin{equation}\label{eq:goodnews}\left.\partial_t\left(f_A(x; t)g(x; t)\right)\right|_{t=\theta} = l_A(x; \theta, \theta)g(x; \theta)+H(x;  \theta),\end{equation}
with $H(x;   \theta)$ a function whose complete expression is provided in the Appendix. As shown there, it is easy to bound $H(x;  \theta)$ uniformly in $\theta$ over $\Theta_0$ by a $m_{\mathcal X}$-integrable function. Moreover,   Assumption A  guarantees  that the same holds for $ l_A(x; \theta, \theta)g(x; \theta)$. Hence $f_A$ satisfies Condition~(iii). Wrapping up, we have thus proved that $f_A\in\mathcal{F}(g; \theta_0)$. The conclusion follows,  
since $H(x;  \theta_0) =0$ for all  $x\in\mathcal{X}$ (see the Appendix) and since, by hypothesis,  
$${\rm E}[\mathcal{T}_{\theta_0}(f_A, g)(X)]= {\rm E}[\mathbb{I}_{A\cap S_{\theta_0}}(X)-{\rm P}(Z_{\theta_0}\in A)\mathbb{I}_{S_{\theta_0}}(X)]=0.$$

Next suppose that $p>1$. Let $\theta_0:=(\theta_0^1,\ldots,\theta_0^p)$ and fix $\Theta_0:=\Theta_0^1\times\ldots\times\Theta_0^p$ a bounded (rectangular) neighborhood  of $\theta_0$ on which $g$ satisfies  Assumption~A at $\theta_0$. Define, for all $j=1,\ldots,p$ and for all $A\in \mathcal{B}_{\mathcal{X}}$, the mappings 
$$\bar\theta_0^j : \Theta_0^j \to \Theta_0: u \mapsto (\theta_0^1,\ldots,\theta_0^{j-1},u,\theta_0^{j+1},\ldots,\theta_0^p)$$
and
\begin{equation*}f_A^j:\mathcal{X}\times\Theta_0\rightarrow\R:(x, \theta) \mapsto  \frac{1}{g(x;\theta)}\int_{\theta_0^j}^{\theta^j} l_A^j(x;u, \theta^j)g(x;\bar\theta_0^j(u))dm_{\Theta^j}(u),\end{equation*}
with 
$$l_A^j(x;u, \theta^j):=\left(\mathbb{I}_A(x)-{\rm P}\left(Z_u^j\in A\, | \, Z_u^j\in S_{\bar\theta_0^j(\theta^j)}\right)\right)\mathbb{I}_{S_{\bar\theta_0^j(\theta^j)}}(x),$$
where $${\rm P}(Z_u^{j}\in B):=\int_\mathcal{X}\mathbb{I}_B(x)g(x;\bar\theta_0^j(u))dm_\mathcal{X}(x)$$ for $B\in \mathcal{B}_{\mathcal{X}}$. The $p$-variate equivalent of the function $f_A$ in (\ref{eq:stein_sol}) is given by $f_A^{(p)}(x;\theta):=\sum_{j=1}^pf_A^j(x;\theta)$. Along the same lines as for the special case $p=1$, Conditions (i)-(iii) are now easily seen to be satisfied by $f_A^{(p)}$ (we draw the reader's attention to the fact that the rectangular nature of the neighborhood $\Theta_0$ is important in order to ensure Condition (iii)). The result readily follows.
\end{proof}


\begin{remark} Nowhere in the proof  did we need to specify whether the random vector $X$ is univariate (for $k=1$) or multivariate (for $k>1$). \end{remark}

\begin{remark}\label{rem:dimp}
When $p>1$, the (vectorial) operator  $\mathcal{T}_{\theta_0}(f,g)$  contains, in a sense,  $p$ different characterizations of the $\theta=(\theta^1, \ldots, \theta^p)$-parametric density $g$ at $\theta_0$.  The  requirements (in this formulation of the result) on the   test functions $f$ are, perhaps, unnecessarily stringent. Indeed, setting $\theta^{(q)}:=(\theta^{i_1}, \ldots, \theta^{i_q})$ for $1\le i_1\le \ldots \le i_q \le p$, we can obviously  consider $g$ as a $\theta^{(q)}$-parametric density. The corresponding  $q$-dimensional sub-vector of $\mathcal{T}_{\theta_0}(f,g)$ also gives rise to a (vectorial) Stein operator 
for which the conclusions of Theorem \ref{theo} also hold at $\theta_0$, 
this  time  with a possibly  larger class of test functions $f$ (thanks to  the weakening of the requirements imposed by Condition~(iii)).   In particular, taking $q=1$, we obtain $p$ distinct one-dimensional characterizations of $g$ at $\theta_0$. This might be very helpful in approximation theorems concerning $g$.
\end{remark}

\begin{remark}
Note that both implications  in Theorem~\ref{theo} are obtained at fixed $\theta_0\in\Theta$. We attract the reader's attention to the fact that all our calculations and manipulations, as well as all the conditions on the functions at play, are consequently local around $\theta_0$. 
\end{remark}

\begin{remark} \label{rem:disc_par} All the definitions and  arguments above can be extended to encompass distributions with a discrete parameter space $\Theta$ (such as, e.g., the discrete uniform). For this it suffices, in a sense, to replace the derivatives and integrals by forward (or backward) differences and summations, respectively. Although it is easy to obtain Stein operators by this means, determining the exact conditions under which the theorem holds  nevertheless requires some care, since in this case there arise problems which originate in the interplay between the support of the target density and the parameter of interest. Because of these (structural) intricacies,  working out explicit conditions on the target density in this framework appears to be a rather sterile exercise, which is perhaps better suited to ad hoc case by case arguments. This issue will no longer be addressed within the present paper.    \end{remark}


The first statement  of  Theorem \ref{theo} can  be seen as a user-friendly  Stein operator-producing mechanism,  since  any subclass $\tilde{\mathcal{F}}(g; \theta_0)\subset \mathcal{F}(g; \theta_0)$ yields  a left-right implication, i.e. an implication of the form $$X \sim g(\cdot; \theta_0) \Longrightarrow {\rm E}\left[\mathcal{T}_{\theta_0}(f, g)(X)\right]=0 \text{ for all } f \in \tilde{\mathcal{F}}(g; \theta_0).$$ This   raises some important  questions. Indeed, consider for instance  the two operators provided in Example~\ref{ex:2}.  As it turns out, both these operators have proven to be extremely useful in applications and their properties are fundamental in the history of the Stein method. However, as already noted by a number of authors before us, they  are by no means the only such  operators for the Gaussian or the Poisson distribution; in our framework they are just  two particular instances of equation \eqref{eq:operator}    restricted  to certain very specific forms of test functions.  A natural question is therefore that of whether there exist other subclasses of test functions for which the corresponding operators would also be useful in applications. 
It is possible that this question does not allow for a  fully satisfactory answer. More precisely  it is possible that, for any given problem, there is no {\it a priori} reason why a given operator would yield better rates of convergence than any other, and perhaps in each problem a careful combination of different characterizations ({\it \`a la} Chatterjee, Fulman and R\"ollin   \citep{CFR11}) would be fruitful and would  allow for obtaining better results than those obtained by focusing on a single characterization alone. 

In any case it   seems intuitively clear that, in order for a subclass and the corresponding operator to be of practical use, they need to characterize  the law under consideration,  that is, we should have the relationship
\begin{equation*}X \sim g(\cdot; \theta_0) \Longleftrightarrow {\rm E}\left[\mathcal{T}_{\theta_0}(f, g)(X)\right]=0 \text{ for all }Êf \in \tilde{\mathcal{F}}(g; \theta_0),\end{equation*}
where the right-left implication is to be understood in the sense of (\ref{eq:conc}) in case $S_{\theta_0}$ is a strict subset of $\mathcal{X}$.  Constructing such  subclasses, which we call {\it $\theta$-characterizing for $g$ at $\theta_0$}, is relatively easy. Indeed  it suffices to adjoin the function $f_A$ defined in \eqref{eq:stein_sol} to  any collection (even empty) of test functions which satisfy the three conditions in Definition \ref{def}.   Such an approach is, however, of limited interest and, moreover, does not allow for clearly identifying the form of the corresponding operators. We therefore suggest a more constructive approach, which we describe in detail in the next section.

\section{Characterizing probability distributions}\label{sec:explicit} 

In this section we provide a general ``recipe'' which allows for constructing $\theta$-characterizing subclasses with well-identified operators.  We apply our method to build  general characterizations for location families,  scale families and discrete distributions. Many well-known Stein characterizations fall under the umbrella of these results. We also show how our method can be applied to obtain more unusual characterizations. 

\subsection{Characterizations under an exchangeability condition}  \label{sec:recipe}For the sake of simplicity, we let $k=p=1$. Fix $\theta_0 \in \Theta$ and choose $g \in \mathcal{G}$ which satisfies Assumption A at $\theta_0$. In order to construct a $\theta$-characterizing  subclass $\tilde{\mathcal{F}}(g; \theta_0)\subset \mathcal{F}(g; \theta_0)$, we suggest the following method.\\

\noindent  {\it Step 1:} Consider Condition (i) in Definition \ref{def}, which requires that we have 
 $$\int_{\mathcal{X}} f(x; \theta) g(x; \theta) dm_{\mathcal{X}}(x) = c_f$$
 for $c_f \in \R$. In many cases, the interaction between the variable $x$ and the parameter $\theta$ within the density $g$ allows to determine a favored family of test functions $\tilde{f}_0(x; \theta)$ which satisfy this condition. Moreover, these functions are usually expressible as  $\tilde{f}_0(x; \theta) = \tilde{T}(f_0; \theta)(x),$
 with $f_0 \in \mathcal{X}^\star$ and $\tilde{T}:  \mathcal{X}^\star\times \Theta \to \left(\mathcal{X}\times \Theta\right)^\star$.\\

\noindent {\it Step 2:}   For  $\tilde{T}$ and $f_0$ as given in Step 1, define the {\em exchanging operator } $T:  \mathcal{X}^\star\times \Theta \to \left(\mathcal{X}\times \Theta\right)^\star$ as a transformation which satisfies the {\em exchangeability condition} 
  \begin{equation}\label{eq:exch}\left.\partial_\theta \left(\tilde{T}(f_0; \theta)(x) \, g(x; \theta)\right) \right|_{\theta=\theta_0} =\left. \partial_y \left(T(f_0; \theta_0)(y) \, g(y; \theta_0) \right)\right|_{y=x}\end{equation}
over $\mathcal{X}$, where $\partial_y$ either means the derivative in the sense of distributions or the discrete (forward or backward) difference, and we hereby implicitly require that $T$ is such  that the derivative on the rhs of \eqref{eq:exch} is well-defined over $\mathcal{X}$. \\

\noindent  {\it Step 3:} Define the class $\mathcal{F}_0 := \mathcal{F}_0(g; \theta_0)$ as the collection of all functions $f_0 \in \mathcal{X}^\star$ such that $\tilde{T}(f_0; \theta) \in \mathcal{F}(g; \theta_0)$. Note  that we therefore  have the (new) left-right implication 
$$X \sim g(\cdot; \theta_0) \Longrightarrow {\rm E}\left[\dfrac{\left. \partial_y \left(T(f_0; \theta_0)(y) \, g(y; \theta_0) \right)\right|_{y=X}}{g(X; \theta_0)}\right]=0 \text{ for all } f_0 \in {\mathcal{F}}_0.$$
\vspace{.2cm}

\noindent  {\it Step 4:} Solve the {\em Stein equation} 
  \begin{equation}\label{eq:ch_st_eq} \left. \partial_y \left(T(f_0^A; \theta_0)(y) \, g(y; \theta_0) \right)\right|_{y=x} = l_A(x; \theta_0, \theta_0)g(x; \theta_0)\end{equation}
where $l_A(x; \theta_0, \theta_0)$ is as in the proof of Theorem \ref{theo}. If $T(\cdot; \theta_0)$ is invertible, it then suffices to check whether the corresponding  $f_0^A$ belongs to $\mathcal{F}_0$ in order to obtain the characterization 
$$X \sim g(\cdot; \theta_0) \Longleftrightarrow {\rm E}\left[\dfrac{\left. \partial_y \left(T(f_0; \theta_0)(y) \, g(y; \theta_0) \right)\right|_{y=X}}{g(X; \theta_0)}\right]=0 \text{ for all } f_0 \in {\mathcal{F}}_0,$$
where the right-left implication is to be understood, as before,  in the sense of (\ref{eq:conc}) in case the support  $S_{\theta_0}$ of $g(\cdot; \theta_0)$ is a strict subset of $\mathcal{X}$.

The resulting $\theta$-characterizing subclass $\tilde{\mathcal{F}}(g; \theta_0)$ is none other than the collection $\{\tilde{T}(f_0; \theta) \, | \, f_0 \in \mathcal{F}_0\}Ê\cup \{f_A\}$; this collection not only has the desired properties, but also is accompanied with a well-identified Stein operator. In the sequel it will be more convenient to state our results in terms of $\mathcal{F}_0$ rather than in terms of $\tilde{\mathcal F}(g; \theta_0)$. This is in accordance with all other results of this form.

There are a number  of ways in which one can extend the method presented above  to the cases $k>1$ and $p>1$. 
Also, for  given $\theta_0$ and $\theta$-parametric density $g$,  the choice of class $\mathcal{F}_0$ and  exchanging operator $T$  is not unique. 
Moreover, determining straightforward minimal conditions on the $f_0$ for the characterization to hold seems to be impossible without making further regularity assumptions on the target density $g$.  These considerations entail that it is perhaps more fruitful to tackle different  $\theta$-parametric densities with  ad hoc arguments. There are nevertheless  important instances in which one can obtain  general results with relative ease. To this end consider  the following assumption on univariate $\theta$-parametric densities. \\
 
\noindent Assumption B :     there exists $x_0\in \mathcal{X}$ such that   $$\left| \int_{\mathcal X} \left(\int_{x_0}^{x}l_A(y; \theta_0,\theta_0)g(y; \theta_0)dm_{\mathcal{X}}(y)\right)\mathbb{I}_{S_{\theta_0}}(x)dm_{\mathcal{X}}(x)\right|<\infty$$
for all $A \in \mathcal{B}_{\mathcal{X}}$, where $l_A(y; \theta_0,\theta_0)$ is defined as in the proof of Theorem \ref{theo}. \\


\noindent This  is  a  condition on the tails of the density $g(\cdot; \theta_0)$ which is, for instance, satisfied by the Gaussian  and the exponential distributions (while the latter is evident, see for the former \citep{CS05} page 4). As we will see,  Assumption B is  useful for determining general characterization results in location and scale models. 

\subsection{Location-based characterizations}\label{sub:loc}

In this subsection we apply  the method described in Section \ref{sec:recipe}  to study  laws whose parameter of interest is a location parameter.  
%
%
\begin{corollary} \label{cor:location}
Let  $k=p=1$ and $\mathcal{X}=\R = \Theta$, and   fix $\mu_0 \in \Theta$. Define $\mathcal{G}_{\text{loc}}$ as the collection of densities $g_0 : \mathcal X \to \R^+$ with support $S\subset\mathcal{X}$  such that the $\mu$-parametric density $g(x; \mu)=g_0(x-\mu)$ belongs to $\mathcal{G}$ and satisfies Assumptions A  and  B at $\mu_0$. Let     $\Theta_0 \subset  \Theta$ be as in Assumption A, and  define  $\mathcal{F}_0:=\mathcal{F}_0(g_0; \mu_0)$ as the collection of all   $f_0: \mathcal{X} \to \R$ such that  \vspace{0.2cm}

\noindent   Condition ($\mu$-i) :   $\left|\int_\mathcal{X} f_0(x) g_0(x) dm_{\mathcal{X}}(x)\right|  < \infty$,\vspace{0.2cm}

\noindent   Condition ($\mu$-ii)  : the mapping $x \mapsto  f_0(x)g_0(x)$ is  differentiable in the sense of distributions over $\mathcal{X}$,\vspace{0.2cm}

\noindent   Condition ($\mu$-iii) : there exists  a $m_{\mathcal{X}}$-integrable function $h: \mathcal{X} \to \R^+$  such that   $\left|\left.\partial_{y} (f_0(y - \mu) g_0(y-\mu)) \right|_{y=x}\right|\le h(x)$ over $\mathcal{X}$  for all $\mu\in \Theta_0$. \vspace{0.2cm}

\noindent Then  ${\mathcal{F}}_0$ is $\mu$-characterizing for $g_0$ at $\mu_0$, with $\mu$-characterizing operator
\begin{equation} \label{eq:location}\mathcal{T}_{\mu_0}(f_0, g_0) : \mathcal{X} \to \mathcal{X} : x \mapsto -\dfrac{\left.\partial_y\left(f_0(y- \mu_0)g_0(y-\mu_0)\right)\right|_{y=x}}{g_0(x-\mu_0)}.\end{equation}
\end{corollary}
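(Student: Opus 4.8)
The plan is to realize Corollary~\ref{cor:location} as a direct application of the four-step recipe of Section~\ref{sec:recipe}, with the exchanging operator chosen so that differentiation in $\mu$ is traded for differentiation in $x$. First I would observe that for a location family $g(x;\mu)=g_0(x-\mu)$, Condition~(i) of Definition~\ref{def} is satisfied precisely by test functions of the form $\tilde f_0(x;\mu)=f_0(x-\mu)$: indeed $\int_{\mathcal X}f_0(x-\mu)g_0(x-\mu)\,dm_{\mathcal X}(x)$ is independent of $\mu$ by translation invariance of $m_{\mathcal X}$ (when $\mathcal X=\R$, Lebesgue measure is translation invariant). So in the notation of Step~1, $\tilde T(f_0;\mu)(x)=f_0(x-\mu)$. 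For Step~2 I would verify the exchangeability condition \eqref{eq:exch}: since $\partial_\mu\big(f_0(x-\mu)g_0(x-\mu)\big)\big|_{\mu=\mu_0}=-\,\partial_y\big(f_0(y-\mu_0)g_0(y-\mu_0)\big)\big|_{y=x}$ by the chain rule, the exchanging operator is simply $T(f_0;\mu_0)(y)=-\,f_0(y-\mu_0)$, and the derivative on the right-hand side of \eqref{eq:exch} is well-defined on $\mathcal X=\R$ thanks to Condition~($\mu$-ii). This produces the operator \eqref{eq:location} and, by Step~3, the left-right implication, once we identify $\mathcal F_0(g_0;\mu_0)$ with the class of $f_0$ for which $(x,\mu)\mapsto f_0(x-\mu)$ lies in $\mathcal F(g;\mu_0)$; I would check that Conditions~($\mu$-i), ($\mu$-ii), ($\mu$-iii) are exactly the translations of Definition~\ref{def}'s Conditions~(i), (ii), (iii) for this family (Condition~(i) becomes finiteness of $\int f_0 g_0$ because the integral is $\mu$-independent, and Conditions~(ii)--(iii) transcribe verbatim using the chain rule).

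Next, for Step~4, I would solve the Stein equation \eqref{eq:ch_st_eq}, which here reads
\begin{equation*}
-\,\partial_y\big(f_0^A(y-\mu_0)g_0(y-\mu_0)\big)\big|_{y=x}=l_A(x;\mu_0,\mu_0)g_0(x-\mu_0),
\end{equation*}
with $l_A(x;\mu_0,\mu_0)=\big(\mathbb I_A(x)-{\rm P}(Z_{\mu_0}\in A)\big)\mathbb I_{S+\mu_0}(x)$ (the conditioning event is trivially true since $S_{\mu_0}=S+\mu_0$ does not depend on $\mu$ in a neighborhood). Since $T(\cdot;\mu_0)$ is invertible (it is just translation composed with a sign), integrating once gives the candidate
\begin{equation*}
f_0^A(x)=-\frac{1}{g_0(x)}\int_{x_0}^{x+\mu_0}l_A(y;\mu_0,\mu_0)g_0(y-\mu_0)\,dm_{\mathcal X}(y)
\end{equation*}
for a suitable base point $x_0$; the main point is then to confirm $f_0^A\in\mathcal F_0$. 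Condition~($\mu$-i) for $f_0^A$ is exactly the content of Assumption~B (the double integral there is, up to the harmless shift by $\mu_0$, $\int f_0^A g_0$); Condition~($\mu$-ii) holds because the integrand is locally integrable; and Condition~($\mu$-iii) follows because $\partial_y(f_0^A(y-\mu_0)g_0(y-\mu_0))=-l_A(y;\mu_0,\mu_0)g_0(y-\mu_0)$ is bounded in absolute value by $g_0$, which by Assumption~A is dominated by an $m_{\mathcal X}$-integrable $h$ uniformly over $\Theta_0$ (using that $S_{\mu_0}$ is fixed near $\mu_0$). With $f_0^A\in\mathcal F_0$ established, the general machinery of Theorem~\ref{theo}(2) — equivalently, the direct computation ${\rm E}[\mathcal T_{\mu_0}(f_0^A,g_0)(X)]={\rm E}[\mathbb I_{A\cap S_{\mu_0}}(X)-{\rm P}(Z_{\mu_0}\in A)\mathbb I_{S_{\mu_0}}(X)]=0$ for all $A$ — forces $X\mid X\in S_{\mu_0}\stackrel{\mathcal L}{=}Z_{\mu_0}$, which is the right-left implication in the sense of \eqref{eq:conc}.

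I expect the only genuine subtlety, and hence the main obstacle, to be the verification that $f_0^A$ satisfies Condition~($\mu$-iii) uniformly over the neighborhood $\Theta_0$ rather than just at $\mu_0$ — one must be careful that the support $S_{\mu_0}$ appearing through $\mathbb I_{S_\mu}$ does not move, or rather that the recipe is applied at fixed $\mu_0$ so that $l_A$ is frozen at $(\mu_0,\mu_0)$ and only $g_0(\cdot-\mu)$ varies with $\mu$, at which point Assumption~A delivers the uniform bound. Everything else is bookkeeping: matching the three Conditions, invoking translation invariance for Condition~(i), and reading off Assumption~B as precisely the integrability needed for $f_0^A$. I would also remark (perhaps in a sentence) that when $S=\mathcal X=\R$ the conditioning in \eqref{eq:conc} is vacuous and one recovers the clean statement $X\stackrel{\mathcal L}{=}Z_{\mu_0}$, which is how the classical Gaussian case \eqref{eq:SG} sits inside this corollary.
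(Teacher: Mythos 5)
Your proposal follows the paper's own proof essentially verbatim: the same choice $\tilde T(f_0;\mu)(x)=f_0(x-\mu)$, the same exchanging operator $T(f_0;\mu_0)(x)=-f_0(x-\mu_0)$, the same observation that Conditions ($\mu$-i)--($\mu$-iii) transcribe Conditions (i)--(iii) of Definition~\ref{def}, the same candidate solution obtained by integrating the Stein equation, and the same division of labour between Assumption~A (for Condition ($\mu$-iii)) and Assumption~B (for Condition ($\mu$-i)).

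The one point you gloss over, and the only place your write-up could actually fail, is the boundary of the support. Since $g_0$ carries the indicator $\mathbb{I}_S$, the distributional derivative $\partial_y\bigl(f_0^A(y-\mu_0)g_0(y-\mu_0)\bigr)$ contains the term $\bigl(\int_{x_0}^{y}l_A\,g_0(\cdot-\mu_0)\,dm_{\mathcal X}\bigr)\,\partial_y\mathbb{I}_{S}(y-\mu_0)$, i.e.\ Dirac masses at the boundary points of $S+\mu_0$ weighted by the value of the antiderivative there. Unless that antiderivative vanishes at \emph{every} such boundary point, the candidate does not solve the Stein equation in the sense of distributions and Condition ($\mu$-iii) is violated (a Dirac mass cannot be dominated by an $m_{\mathcal X}$-integrable function). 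The paper handles this by adding a correction term $c(x)$, with zero distributional derivative, chosen precisely so that $\bigl(\int_{x_0}^{x}l_A\,g_0(\cdot-\mu_0)\,dm_{\mathcal X}+c(x)\bigr)\partial_x\mathbb{I}_{S}(x-\mu_0)=0$ on all of $\mathcal X$. Your phrase ``for a suitable base point $x_0$'' covers only the case where $S$ is a single interval (put $x_0$ at the left endpoint; the centering of $l_A$ then kills the right endpoint too), not a general $S\subset\mathcal X$. Also, a small slip: in a location family the support $S_\mu=S+\mu$ \emph{does} move with $\mu$; the conditioning in $l_A(x;\mu_0,\mu_0)$ is trivial simply because both arguments equal $\mu_0$, so that $\mathrm{P}(Z_{\mu_0}\in S_{\mu_0})=1$, not because the support is locally constant. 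Neither issue changes the architecture of the argument, but the correction term is needed for the corollary as stated.
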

A proof, which is a direct application of   the method described in Section~\ref{sec:recipe}, is provided in the Appendix.
The operator in   \eqref{eq:location}   -- as well as the conditions on the densities and the conditions on the test functions $f_0$ -- differ slightly from those already available in the literature; this matter has already been discussed in Section~\ref{sec:heuristic}.  
 
 Corollary \ref{cor:location} contains a number of well-known univariate characterizations covered in the literature.  For instance,   taking  $g(\cdot; \mu)$  to be the density of a $\mathcal{N}(\mu,1)$ (which satisfies  Assumptions A and B at $\mu_0=0$) we can use the operator provided in  Example~\ref{ex:2};  Corollary \ref{cor:location} then  leads to the famous Stein characterization of the standard normal distribution. Likewise,  introducing an artificial location parameter $\mu$ within the  exponential density with scale parameter 1 (which, again, satisfies Assumptions A and B at $\mu_0=0$)   leads to  the characterization of the exponential distribution given in Example~\ref{ex:exp}. More generally, when $g$ belongs to the (continuous) \emph{exponential family} (see \citep{H78}), one easily sees how the same manipulations allow to retrieve the known characterizations (see also  \citep{H82} or \citep{LC98}). We refer to  \citep{SDHR04}, \citep{D91}  and \citep{S01} for more location-based characterizations.

Next consider   the semi-circular law  whose   density is given by   
\begin{equation}\label{eq:scl}g_0(x-\mu)=\frac{2}{\pi\sigma^2}\sqrt{\sigma^2-(x-\mu)^2}\,\mathbb{I}_{[-\sigma,\sigma]}(x-\mu),\end{equation}
with $\mu\in\R$ being a location and $\sigma\in\R^+_0$ a known scale parameter. In the special case $\mu=0$ and $\sigma=2$, G\"otze and Tikhomirov \citep{GT07}  prove that a random variable $X$ is distributed according to \eqref{eq:scl} if and only if 
\begin{equation}\label{eq:scl_car}{\rm E}[(4-X^2)f'(X) -3X f(X)] = 0\end{equation}
for all test functions $f$ in a certain class of functions. We claim that \eqref{eq:scl_car} falls within the category of location-based characterizations. To see this it suffices to note that, although we are in a location model with target density satisfying Assumptions A and B at all points $\mu_0 \in \R$, the derivative  $g_0'(x-\mu)$  is not bounded at the edges of the support. Conditions ($\mu$-ii) and ($\mu$-iii) therefore entail some stringent requirements on the admissible class of test functions.  In order to be able to read these requirements more easily, one way to  proceed is to  consider only $f_0$'s of the form $f_0(x) = f_1(x)(\sigma^2-x^2)^r$, with $r>1/2$. Writing out the location-based characterization in terms of the functions $f_1$ instead of $f_0$ yields, for $r=1$, the expression in  \eqref{eq:scl_car};   sufficient conditions on $f_1$ for $f_0$ to belong to $\mathcal{F}_0$ are easy to provide  (see   \citep{GT07} in the case  $r=1$ and $\sigma = 2$). 

Note that, when the target density belongs to Pearson's family of distributions, there exists a general result due to \cite{S01} for obtaining Stein characterizations which encompasses many of the characterizations obtainable through Stein's density approach. We wish to stress the fact that all  these results can be recovered through our Corollary \ref{cor:location}.

And now a multivariate example. Consider a random $k$-vector $Z_{\mu_0}$ with $\mu_0 \in \R^k$ and density of the form $g(x; \mu) :=g_0(x-\mu)= g_0(x_1-\mu^1, x_2-\mu^2, \ldots, x_k-\mu^k).$ Suppose, for the sake of simplicity, that the support of $g_0(x-\mu)$  does not depend on $\mu$ (i.e. $S=\mathcal{X} = \R^k$).  One way to characterize such distributions at $\mu_0$ is to define, for  fixed $x_2, \ldots, x_k$, the univariate $\mu^1$-parametric density $g_1(x_1; \mu_1) = g_0(x_1-\mu^1, x_2-\mu^2_0, \ldots, x_k-\mu^k_0).$  Requiring that $g_1 \in \mathcal{G}$ and satisfies Assumptions A and B at $\mu_0^1$, we easily determine a class of functions $\mathcal{F}_0^1$ as in Corollary \ref{cor:location} to obtain 
\begin{align}& X \stackrel{\mathcal{L}}{=} Z_{\mu_0} \Longleftrightarrow\nonumber  \\ & {\rm E}\left[\dfrac{\partial_{y}\left.\left(f_0(y-\mu_0, X) g_0(y-\mu_0, X)\right)\right|_{y=X_1}}{g_0(X-\mu_0)}\right] = 0 \text{ for all } f_0 \in \mathcal{F}_0^1,\label{eq:loc_mult} \end{align}
where we use the abuse of notations $(y-\mu_0, X) = (y-\mu^1_0, X_2-\mu^2_0, \ldots, X_k-\mu^k_0)$ and $X-\mu_0 = (X_1-\mu^1_0, X_2-\mu^2_0, \ldots, X_k-\mu^k_0)$. The choice of $\mu^1$ as parameter of interest was of course for convenience only, and similar relationships hold for derivatives with respect to  $x_2, \ldots, x_k$ as well.  Moreover, when $Z_{\theta_0}$ has support $\mathcal{X}$ and  independent marginals, one easily sees how to aggregate these different results and write out a class of functions $\mathcal{F}_0^{(k)}$ as in Corollary~\ref{cor:location} to get 
\begin{equation}\label{eq:prout}Ê X \sim g(\cdot; \theta_0) \Longleftrightarrow  {\rm E}\left[\dfrac{\left.\nabla_y(f_0(y -\mu_0)g_0(y -\mu_0))\right|_{y=X}}{g_0(X -\mu_0)}\right]=0 \text{ for all } f_0 \in \mathcal{F}_0^{(k)}.\end{equation}

We conclude this section by showing how \eqref{eq:loc_mult} and \eqref{eq:prout} read in the Gaussian case. Here, setting  $\mu_0=0 \in \R^k$ and plugging the multivariate Gaussian density  $g(x; \mu, \Sigma)$ with $\Sigma$ a known symmetric positive definite $k\times k$ matrix into \eqref{eq:loc_mult}  we get, for $j=1, \ldots, k$,  
\begin{equation}\label{eq:multn1}X \sim \mathcal{N}(0, \Sigma) \Longleftrightarrow {\rm E} \left[\left.\partial_{y_j} (f_0(y_j, X))\right|_{y_j=X_j} -\sigma_jf_0(X)\right]=0 \text{ for all } f_0\in\mathcal{F}_0^j  \end{equation}
where we use the notations $(y_j, X) = (X_1, \ldots, X_{j-1}, y_j, X_{j+1}, \ldots, X_k)$ and   $\sigma_j:=(\Sigma^{-1}X)_j = \sum_{i=1}^k (\Sigma^{-1})_{ji}X_i.$ 
Moreover, when $\Sigma$ is the identity matrix $I_k$ we can use \eqref{eq:prout} to obtain 
\begin{equation}\label{eq:multn2}X \sim \mathcal{N}(0, I_k)  \Longleftrightarrow {\rm E}\left[\left.\nabla_y(f_0(y))\right|_{y=X}-X f_0(X)\right]=0 \text{ for all } f_0 \in \mathcal{F}_0^{(k)}.\end{equation}
These characterizations of the multivariate Gaussian are, to the best of our knowledge, new. They are to be compared with existing results given, e.g., in \citep {CM08} and \citep{RR09}.

\subsection{Scale-based characterizations} \label{sub:scale} In this subsection we apply  the  method described in Section \ref{sec:recipe}  to study  laws whose parameter of interest is a scale parameter. 

\begin{corollary} \label{cor:scale}
Let $k=p=1$, $\mathcal{X}=\R$ and $\Theta = \R_0^+$, and   fix $\sigma_0 \in \Theta$.  Define $\mathcal{G}_{\text{sca}}$ as the collection of densities $g_0 : \mathcal X \to \R^+$ with support $S\subset\mathcal{X}$  such that the $\sigma$-parametric density $g(x; \sigma)=\sigma g_0(\sigma x)$ belongs to $\mathcal{G}$ and satisfies Assumptions A  and  B at  $\sigma_0$. Let     $\Theta_0 \subset  \Theta$ be as in Assumption A, and  define  $\mathcal{F}_0:=\mathcal{F}_0(g_0; \sigma_0)$ as the collection of all  $f_0: \mathcal{X} \to \R$ such that  \vspace{0.2cm}

\noindent   Condition ($\sigma$-i) :   $\left|\int_\mathcal{X} f_0(x) g_0(x) dm_{\mathcal{X}}(x)\right|  < \infty$.\vspace{0.2cm}

\noindent   Condition ($\sigma$-ii)  : the mapping $x \mapsto x f_0(x) g_0(x)$ is  differentiable in the sense of distributions over $\mathcal{X}$,\vspace{0.2cm}

\noindent   Condition ($\sigma$-iii) : there exists a $m_{\mathcal{X}}$-integrable function $h: \mathcal{X} \to \R^+$   such that $\left|\left.\partial_{y} (y f_0(\sigma y) g_0(\sigma y)) \right|_{y=x}\right|\le h(x)$ over $\mathcal{X}$  for all $\sigma \in \Theta_0$.\vspace{0.2cm}

\noindent Then  ${\mathcal{F}}_0$ is $\sigma$-characterizing for $g_0$ at $\sigma_0$, with $\sigma$-characterizing operator
\begin{equation} \label{eq:scale}\mathcal{T}_{\sigma_0}(f_0, g_0) : \mathcal{X} \to \mathcal{X} : x \mapsto \dfrac{\partial_{y}\left(y f_0(\sigma_0 y)g_0(\sigma_0 y)\right)|_{y=x}}{\sigma_0g_0(\sigma_0 x)}.\end{equation}
\end{corollary}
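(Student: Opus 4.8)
\textbf{Proof proposal for Corollary \ref{cor:scale}.}

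The plan is to verify that the scale family $g(x;\sigma) = \sigma g_0(\sigma x)$ fits the template of Steps 1--4 in Section \ref{sec:recipe}, so that the general machinery of Theorem \ref{theo} applies. First I would identify the favored family of test functions demanded by Step 1: Condition (i) requires $\int_{\mathcal X} f(x;\sigma) g(x;\sigma) dm_{\mathcal X}(x) = c_f$ for all $\sigma$ in a neighborhood of $\sigma_0$. Substituting $x \mapsto x/\sigma$ inside the integral and using $g(x;\sigma)=\sigma g_0(\sigma x)$, one sees that functions of the form $\tilde f_0(x;\sigma) = f_0(\sigma x)$ satisfy $\int_{\mathcal X} f_0(\sigma x)\,\sigma g_0(\sigma x)\, dm_{\mathcal X}(x) = \int_{\mathcal X} f_0(y) g_0(y)\, dm_{\mathcal X}(y)$, which is constant in $\sigma$ and finite precisely under Condition ($\sigma$-i). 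Hence the natural choice is $\tilde T(f_0;\sigma)(x) = f_0(\sigma x)$. This pins down $\mathcal F_0 = \mathcal F_0(g_0;\sigma_0)$ as in Step 3, and Conditions ($\sigma$-ii), ($\sigma$-iii) are exactly the translations of Definition \ref{def}'s Conditions (ii), (iii) into the variables $f_0, g_0$ (note that differentiating $f_0(\sigma x)\sigma g_0(\sigma x)$ with respect to $\sigma$ produces, up to the chain rule, the derivative $\partial_y(y f_0(\sigma y) g_0(\sigma y))$ evaluated at $y=x$ — which is where the weight $y$ in ($\sigma$-ii)/($\sigma$-iii) comes from).

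Next I would carry out Step 2, producing the exchanging operator $T$ satisfying the exchangeability condition \eqref{eq:exch}. Here the key computation is
$$\left.\partial_\sigma\bigl(f_0(\sigma x)\,\sigma g_0(\sigma x)\bigr)\right|_{\sigma=\sigma_0} = \left.\partial_y\bigl(y f_0(\sigma_0 y) g_0(\sigma_0 y)\bigr)\right|_{y=x},$$
which one checks by expanding both sides via the product and chain rules: the left side gives $x f_0'(\sigma_0 x) g_0(\sigma_0 x) + f_0(\sigma_0 x) g_0(\sigma_0 x) + \sigma_0 x f_0(\sigma_0 x) g_0'(\sigma_0 x)$, and the right side gives the same three terms. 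Thus $T(f_0;\sigma_0)(y) = y f_0(\sigma_0 y)$ works, and dividing by $g(x;\sigma_0)=\sigma_0 g_0(\sigma_0 x)$ yields precisely the operator \eqref{eq:scale}. Applying Theorem \ref{theo}(1) (composed through $\tilde T$) immediately gives the left-to-right implication of Step 3.

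For the converse I would follow Step 4: solve the Stein equation \eqref{eq:ch_st_eq}, which here reads $\left.\partial_y(y f_0^A(\sigma_0 y) g_0(\sigma_0 y))\right|_{y=x} = l_A(x;\sigma_0,\sigma_0) g_0(\sigma_0 x)\,\mathbb I_{S_{\sigma_0}}(x)$ (up to the scaling constant $\sigma_0$, which is harmless). Integrating from a base point $x_0$, the candidate is $y f_0^A(\sigma_0 y) g_0(\sigma_0 y) = \int_{x_0}^{y} l_A(t;\sigma_0,\sigma_0) g_0(\sigma_0 t)\, dm_{\mathcal X}(t)$ on $S_{\sigma_0}$, from which $f_0^A$ is recovered by dividing by $y g_0(\sigma_0 y)$ (the operator $T(\cdot;\sigma_0)$ is invertible away from $y=0$, and the measure-zero point $y=0$ in the continuous case causes no trouble). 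I would then verify that $f_0^A \in \mathcal F_0$: Condition ($\sigma$-i) is forced by Assumption B — which, after the change of variables $t \mapsto \sigma_0 t$, is exactly the statement that the relevant double integral of $f_0^A g_0$ is finite — while ($\sigma$-ii) is clear and ($\sigma$-iii) follows from Assumption A together with the bound $|l_A|\le 2$, mirroring the treatment of $H(x;\theta)$ in the proof of Theorem \ref{theo} (the details are deferred to the Appendix). Since $f_A = \tilde T(f_0^A;\cdot)$ is then a valid test function whose image under $\mathcal T_{\sigma_0}$ is $l_A(\cdot;\sigma_0,\sigma_0)\mathbb I_{S_{\sigma_0}}$, the hypothesis ${\rm E}[\mathcal T_{\sigma_0}(f_0,g_0)(X)]=0$ for all $f_0\in\mathcal F_0$ forces ${\rm P}(X\in A\cap S_{\sigma_0}) = {\rm P}(Z_{\sigma_0}\in A){\rm P}(X\in S_{\sigma_0})$ for every $A$, i.e. \eqref{eq:conc}.

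\textbf{Main obstacle.} The delicate point is not any single computation but the bookkeeping around the support: when $S$ (hence $S_{\sigma_0}$) is a proper subset of $\R$ and depends on $\sigma$, one must be careful that the division by $y$ in inverting $T$, the indicator functions in $l_A$, and the derivative-in-the-sense-of-distributions of $g_0(\sigma x)\mathbb I_{S}$ all interact correctly — in particular that the ``division by an indicator means multiplication'' convention is applied consistently so that $\mathcal T_{\sigma_0}(f_0^A,g_0)$ really equals $l_A(\cdot;\sigma_0,\sigma_0)\mathbb I_{S_{\sigma_0}}$ and not something with spurious boundary terms. Verifying that Assumption B is the precise integrability hypothesis needed to place $f_0^A$ in $\mathcal F_0$ (rather than merely a sufficient one) is the part I would expect to require the most care, and is presumably why it is relegated to the Appendix.
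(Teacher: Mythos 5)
Your proposal is correct and follows essentially the paper's intended argument: the paper omits this proof precisely because it is the Section \ref{sec:recipe} recipe with $\tilde T(f_0;\sigma)(x)=f_0(\sigma x)$ and exchanging operator $T(f_0;\sigma_0)(y)\propto y f_0(\sigma_0 y)$, mirroring the location proof of Corollary \ref{cor:location} in the Appendix (your dropped factor of $\sigma_0$ in the exchangeability check is, as you note, harmless). The one place where your write-up is slightly glib is Condition ($\sigma$-i) for $f_0^A$: inverting $T$ introduces a weight $1/y$, so the integral to be controlled is $\int \frac{1}{y}\bigl(\int_{x_0}^{y} l_A g\bigr)dm_{\mathcal X}(y)$ rather than the unweighted double integral of Assumption B, a discrepancy you correctly sense in your ``main obstacle'' paragraph but do not fully resolve.
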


The proof of Corollary \ref{cor:scale} is similar to that of Corollary \ref{cor:location}, and hence is omitted. 

As in the location case, this result can be extended in a number of ways to the multivariate setting. 
In the univariate setup, if $g$ is the exponential density with scale parameter~$\lambda$ and if $\lambda_0$ is set to 1, we retrieve the characterization \eqref{eq:exp2}. If $g_0$ is the density of a $\mathcal{N}(0,1)$ distribution, the above characterization reads 
\begin{equation}\label{eq:normscale}
X\sim\mathcal{N}(0,1) \Longleftrightarrow {\rm E}[Xf_0'(X)+(1-X^2)f_0(X)]=0
\end{equation}
for all (differentiable) $f_0\in\mathcal{F}_0$.  

\subsection{Discrete characterizations} \label{sec:discrete} Our last general result concerns   discrete distributions. In this  instance there is, in general, no unique interpretation of the parameters of interest; it depends on the law under investigation.    As will be clear from the proof of Corollary \ref{cor:discrete} below (see the Appendix), our approach in this setting allows us to dispense with Assumption B, which was needed in order to ensure Condition~(i) in Definition~\ref{def}.  However we need to strengthen Assumption A as follows. \\

\noindent Assumption A' : for $\psi(x; \theta):=\left. \partial_u\left(g(x; u)/g(0; u)\right)\right|_{u=\theta}$, there exists a neighborhood $\Theta_0$ of $\theta_0$ and a summable function $h:\Z\to\R^+$ such that 
$$\left|\Delta^+_x \left(\dfrac{\psi(x; \theta)}{\psi(x; \theta_0)}\sum_{j=0}^{x-1}l_A(j; \theta_0,\theta_0)g(j; \theta_0)\right)\right|\le h(x)$$
over $\mathcal{X}$ for all $\theta\in \Theta_0$ and for all $A \in \mathcal{B}_{\mathcal{X}}$, where $l_A(j;\theta_0, \theta_0)$ is defined as in the proof of Theorem \ref{theo} and where $\Delta^+_x$ is the forward difference with respect to $x$. \\

Assumption A' is sufficient to ensure Condition~(iii) in the discrete setting. 
It is not restrictive and is satisfied by all the (discrete) distributions we have considered. For example, in the Poisson case, the ratio ${\psi(x; \theta)}/{\psi(x; \theta_0)}$ is none other than $(\lambda/\lambda_0)^{x-1}\mathbb{I}_{\N_0}(x)$ so that known arguments (see page 65 of \citep{E05}) apply.

\begin{corollary}\label{cor:discrete} Let $k=p=1$, $\mathcal{X}=\Z$ and $\Theta \subset \R$, and   fix $\theta_0 \in \Theta$. Define $\mathcal{G}_{\text{dis}}$ as the collection of $\theta$-parametric discrete densities $g(\cdot;\theta):\mathcal{X}\to[0,1]$ with support $S\subset\mathcal{X}$, which we take of the form  $S=[N]:=\{0, \ldots, N\}$ for some $N\in \N_0\cup\{\infty\}$ not depending on $\theta$,  such that $g\in\mathcal{G}$ and satisfies Assumption A' at $\theta_0$.   Define $\mathcal{F}_0$ as the collection of all functions $f_0: \mathcal{X} \to \R$ for which there exists a summable function $h:\Z\to \R^+$ such that $\left|\Delta^+_x(f_0(x) \partial_u(g(x; u)/g(0;u))|_{u=\theta}) \right|\le h(x)$ over $\mathcal{X}$ for all $\theta \in \Theta_0$, with $\Theta_0$ as in Assumption A'. 

Then $\mathcal{F}_0$ is $\theta$-characterizing for $g$ at $\theta_0$, with $\theta$-characterizing operator 
\begin{equation*} \mathcal{T}_{\theta_0}(f_0, g)(x) =  \dfrac{ \Delta_x^+\left(f_0(x)\left.\partial_\theta\big({g(x; \theta)}/{g(0; \theta)}\big)\right|_{\theta=\theta_0}\right)}{g(x; \theta_0)}.\end{equation*}
\end{corollary}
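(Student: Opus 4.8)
The plan is to run, for $k=p=1$, the four-step recipe of Section~\ref{sec:recipe}; the only genuinely creative step is the choice of telescoping candidate in Step~1. For $f_0\in\mathcal{X}^\star$ I would set, using the division convention adopted after Definition~\ref{def2},
$$\tilde T(f_0;\theta)(x):=\frac{1}{g(x;\theta)}\,\Delta_x^+\!\Big(f_0(x)\,\frac{g(x;\theta)}{g(0;\theta)}\Big),$$
so that $\tilde T(f_0;\theta)(x)\,g(x;\theta)$ is literally a forward difference in $x$. Then Condition~(i) is automatic: the function $x\mapsto f_0(x)g(x;\theta)/g(0;\theta)$ is supported on $[N]$ and vanishes at the ends of its support (finite support when $N<\infty$, and decaying at $+\infty$ under the tail control built into $\mathcal{F}_0$ when $N=\infty$), so $\sum_x\tilde T(f_0;\theta)(x)g(x;\theta)=0$ for every $\theta$; this telescoping is exactly what makes Assumption~B superfluous in the discrete case. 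Since $\Delta_x^+$ and $\partial_\theta$ act on different variables they commute, whence
$$\partial_\theta\!\big(\tilde T(f_0;\theta)(x)g(x;\theta)\big)\big|_{\theta=\theta_0}=\Delta_x^+\!\big(f_0(x)\,\partial_\theta\big(g(x;\theta)/g(0;\theta)\big)\big|_{\theta=\theta_0}\big)=\Delta_x^+\!\big(f_0(x)\psi(x;\theta_0)\big);$$
this identifies the exchanging operator of Step~2 as $T(f_0;\theta_0)(y)=f_0(y)\psi(y;\theta_0)/g(y;\theta_0)$ and, via Definition~\ref{def2}, produces precisely the operator $\mathcal{T}_{\theta_0}(f_0,g)$ announced in the corollary. (For the Poisson density this collapses to the operator of Example~\ref{ex:2}(ii), a reassuring check.)

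For Step~3 I would show that $\tilde T(f_0;\theta)\in\mathcal{F}(g;\theta_0)$ whenever $f_0\in\mathcal{F}_0$. Condition~(i) has just been checked; Condition~(ii) follows from $g\in\mathcal{G}$ together with $g(0;\cdot)>0$ on a neighbourhood of $\theta_0$, since forward differences and quotients by a positive function of $\theta$ preserve distributional differentiability in $\theta$; and Condition~(iii), once the commutation of $\partial_\theta$ and $\Delta_x^+$ is used again, is word for word the summability requirement that defines $\mathcal{F}_0$. By Theorem~\ref{theo}(1) this already yields the implication $X\sim g(\cdot;\theta_0)\Rightarrow{\rm E}[\mathcal{T}_{\theta_0}(f_0,g)(X)]=0$ for all $f_0\in\mathcal{F}_0$.

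For Step~4, the Stein equation $\Delta_x^+\big(f_0^A(x)\psi(x;\theta_0)\big)=l_A(x;\theta_0,\theta_0)g(x;\theta_0)$ is solved by summing from $0$; evaluating the resulting identity at $x=0$ and using that $\psi(0;\theta)=\partial_u\big(g(0;u)/g(0;u)\big)\big|_{u=\theta}=0$ forces the additive constant to vanish, so
$$f_0^A(x)=\frac{1}{\psi(x;\theta_0)}\sum_{j=0}^{x-1}l_A(j;\theta_0,\theta_0)g(j;\theta_0),$$
the (irrelevant) freedom in $f_0^A(0)$ absorbing the fact that $T(\cdot;\theta_0)$ fails to be invertible only at $x=0$. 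It then remains to verify $f_0^A\in\mathcal{F}_0$, i.e.\ to dominate $\big|\Delta_x^+\big(\frac{\psi(x;\theta)}{\psi(x;\theta_0)}\sum_{j=0}^{x-1}l_A(j;\theta_0,\theta_0)g(j;\theta_0)\big)\big|$ by a summable function, uniformly in $\theta\in\Theta_0$ and in $A\in\mathcal{B}_{\mathcal{X}}$ --- which is exactly what Assumption~A$'$ provides. Since $\mathcal{T}_{\theta_0}(f_0^A,g)(x)=l_A(x;\theta_0,\theta_0)$, the hypothesis ${\rm E}[\mathcal{T}_{\theta_0}(f_0^A,g)(X)]=0$ reads ${\rm P}(X\in A\cap S_{\theta_0})={\rm P}(Z_{\theta_0}\in A)\,{\rm P}(X\in S_{\theta_0})$ for every $A$, which is \eqref{eq:conc}; equivalently, one invokes Theorem~\ref{theo}(2) for the subclass $\{\tilde T(f_0;\theta):f_0\in\mathcal{F}_0\}\cup\{\tilde T(f_0^A;\theta)\}$.

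I expect the main obstacle to be Step~4, and specifically the verification that the Stein solution $f_0^A$ belongs to $\mathcal{F}_0$: in contrast to Condition~(i), which the telescoping trivialises, Condition~(iii) for $f_0^A$ depends on the joint behaviour of the ratio $\psi(x;\theta)/\psi(x;\theta_0)$ and of the partial sums $\sum_{j<x}l_A(j;\theta_0,\theta_0)g(j;\theta_0)$, uniformly over $\theta$ near $\theta_0$ and over \emph{all} events $A$. Checking that Assumption~A$'$ is tailored exactly to close this gap --- and that it is satisfied by the standard discrete families (Poisson, geometric, binomial, negative binomial) --- is the delicate point; a secondary nuisance is handling the distributional $\theta$-derivatives and the division-by-indicator convention carefully enough that $\psi(x;\theta_0)$ in the denominator of $f_0^A$ is never a genuine division by zero off the support.
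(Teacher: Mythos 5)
Your proposal is correct and follows essentially the same route as the paper: the same stabilized candidate $\tilde T(f_0;\theta)(x)=\Delta_x^+\big(f_0(x)g(x;\theta)/g(0;\theta)\big)/g(x;\theta)$, the same exchanging operator $T(f_0;\theta_0)(x)=f_0(x)\psi(x;\theta_0)/g(x;\theta_0)$, the same summed Stein solution $f_0^A$, and the same appeal to Assumption A$'$ to place $f_0^A$ in $\mathcal{F}_0$. The only minor imprecision is that the telescoping sum in Condition (i) equals $-f_0(0)$ rather than $0$ in general; since Condition (i) only requires a $\theta$-independent constant $c_f$, this changes nothing.
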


Corollary \ref{cor:discrete} contains a number of well-known discrete characterizations covered in the literature among which, for instance, those for the Poisson  (see the operator in Example \ref{ex:2}), the geometric $Geom(p)$, with $p$-characterizing operator  $$\mathcal{T}_p(f_0, g)(x) =  -\frac{1}{p}\left((x+1) f_0(x+1)-\frac{x}{1-p}f_0(x)\right)\mathbb{I}_{\N}(x),$$ 
or the binomial $Bin(n, p)$, with $p$-characterizing operator 
$$\mathcal{T}_p(f_0, g)(x) =(1-p)^{-n-2}\left((n-x) f_0(x+1)-\frac{1-p}{p}xf_0(x)\right)\mathbb{I}_{[n]}(x).$$

The same  arguments allow, of course,  for dealing with other perhaps more exotic discrete distributions. Consider, for the sake of illustration, the case of the multinomial ${M}(n, p_1, \ldots, p_k)$, with density 
\begin{equation}\label{eq:mult_d}g(x)  = \dfrac{n!}{\prod_{j=0}^kx_j!}\prod_{j=0}^kp_j^{x_j} \mathbb{I}_{\Delta^n}(x)\end{equation}
where $x_0=n-\sum_{j=1}^kx_j$, $p_0=1-\sum_{j=1}^kp_j$ and 
$$\Delta^n = \left\{(x_1, \ldots, x_k) \in \N^k \, | \, 0 \le x_1+\ldots+x_k\le n\right\}.$$
In the same spirit as our previous multivariate characterizations, we start by transforming the problem into a univariate one. For this choose $p_1$ to be the parameter of interest, and rewrite \eqref{eq:mult_d} as 
\begin{equation*} g(x)  =\left(\binom{\bar n_1}{x_1}p_1^{x_1}(\bar p_1-p_1)^{\bar n_1-x_1} \right)     \dfrac{n!}{\bar n_1!} \dfrac{\prod_{j=2}^kp_j^{x_j}}{\prod_{j=2}^kx_j!} \mathbb{I}_{\Delta^n}(x)\end{equation*}
where, letting $\bar x_1 = \sum_{j=2}^kx_j$, we denote $\bar n_1 = n-\bar x_1$ and  $\bar p_1 = 1 - \sum_{j=2}^kp_j$.
Straightforward computations readily yield the corresponding operator 
$$\mathcal{T}_{p_1}(f_0, g)(x) =\xi(x;n)\left((\bar n_1 -x_1) f_0(x_1+1)-\dfrac{\bar p_1 - p_1}{p_1}x_1 f_0(x_1)\right)\mathbb{I}_{\Delta^n}(x),$$
with 
$$\xi(x;n) = \dfrac{\bar p_1}{(\bar p_1 - p_1)^{\bar n_1+2}}.$$

In each of the above cases, determining sufficient  conditions on the test functions $f_0$  for the operators to be $\theta$-characterizing is now a simple exercise which is left to the reader. 

\section{Uncovering new results}\label{sec:new}
 
In this final section, we tackle two examples which do not fall within the scope of the previous general results. In each case, we try to convey some intuition as to how our method works.  As will appear, each of these cases requires the development of \emph{ad hoc} arguments. 

\subsection{The uniform distribution}\label{ex:uni}
  
First take the target distribution $g$ to be the density of a uniform  $U[a, b]$ for $a\le b \in \R$, and define  $a$ to be the parameter of interest. This law is not, {\it stricto sensu}, a member of the scale family. It is, however,  easily seen that it belongs to $\mathcal{G}$ for all $a\neq b$ and satisfies Assumptions A and B at all $a<b$, with $b$ fixed. 
It is readily seen that the  exchanging operator $T(f_0; a)(x) =(x-b)/(b-a)f_0\left((x-a)/(b-a)\right)$ yields the precious relationship \eqref{eq:exch}, with  $\tilde{T}(f_0;  a) = f_0((x-a)/(b-a))$. 
%
%
This leads to the following result (the proof is left to the reader).
\begin{corollary}\label{cor:unif} Let $\mathcal{F}_0$ be the collection of all  functions $f_0:\R \to \R$ which are differentiable (in the sense of distributions) on $[0, 1]$.  Then  ${\mathcal{F}}_0$ is $a$-characterizing for $g$, with $a$-characterizing operator 
$$\mathcal{T}_a(f, g)(x) = \frac{1}{b-a}\left(\frac{x-b}{b-a} f_0'\left(\frac{x-a}{b-a}\right)+ f_0\left(\frac{x-a}{b-a}\right)\right)\mathbb{I}_{[a,b]}(x)-f_0(0)$$
for all   $f_0\in \mathcal{F}_0$.
\end{corollary}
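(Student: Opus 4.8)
The plan is to verify that Corollary~\ref{cor:unif} follows by a direct application of the four-step recipe of Section~\ref{sec:recipe} to the $a$-parametric density $g(x;a) = \frac{1}{b-a}\mathbb{I}_{[a,b]}(x)$, with $b$ fixed, at an arbitrary interior point $a_0 = a$ with $a < b$. First I would record the preliminary facts already asserted in the text: $g(\cdot;a)\in\mathcal G$ (the map $a\mapsto g(\cdot;a)$ is differentiable in the sense of distributions, the derivative picking up a boundary term at the moving endpoint $x=a$), and that Assumptions~A and~B hold at every $a<b$ — for Assumption~A one bounds $g(x;a)\le 1/(b-a_1)$ on a small neighborhood $[a_1,a_2]$ of $a$ with $a_2<b$, and Assumption~B is the finiteness of an integral of a bounded function over the bounded support $[a,b]$, hence trivially holds.

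\textbf{Steps 1--3.} For Step~1, Condition~(i) asks for $\int_a^b f(x;a)\,\frac{dx}{b-a} = c_f$ for all $a$ near $a_0$; the natural family making the integrand ``translate/rescale correctly'' with the moving support is $\tilde f_0(x;a) = f_0\!\left(\frac{x-a}{b-a}\right)$, since then the substitution $t=(x-a)/(b-a)$ turns the integral into $\int_0^1 f_0(t)\,dt$, independent of $a$ (so $c_{f_0} = \int_0^1 f_0$ is finite precisely under Condition~($\mu$-i)-type integrability, here automatic since $[0,1]$ is bounded and $f_0$ differentiable hence bounded there). Thus $\tilde T(f_0;a)(x) = f_0((x-a)/(b-a))$. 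For Step~2 I would simply \emph{verify} the exchangeability identity~\eqref{eq:exch}: compute $\partial_a\big(\tilde T(f_0;a)(x)\,g(x;a)\big)|_{a=a_0}$ — which produces, by the product and chain rules together with the distributional derivative of $\mathbb{I}_{[a,b]}$ in $a$, a bulk term plus a Dirac mass at $x=a_0$ — and check it equals $\partial_y\big(T(f_0;a_0)(y)g(y;a_0)\big)|_{y=x}$ for the proposed $T(f_0;a)(x) = \frac{x-b}{b-a}f_0\!\left(\frac{x-a}{b-a}\right)$; this is the routine computation the corollary statement already announces and I would present it compactly. Step~3 then identifies $\mathcal F_0$ as those $f_0$ with $\tilde T(f_0;\cdot)\in\mathcal F(g;a_0)$: Condition~(i) was handled in Step~1; Condition~(ii) (differentiability in the sense of distributions of $\theta\mapsto f(\cdot;\theta)g(\cdot;\theta)$, equivalently here of the bulk and boundary pieces) and Condition~(iii) (an integrable dominating function for $\partial_a(\tilde f_0 g)$ on a small neighborhood, away from and across the endpoint) both reduce to $f_0$ being differentiable in the sense of distributions on $[0,1]$ — on the bounded support all the resulting functions of $x$ are supported in a fixed compact set and bounded there, so a constant dominator works. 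This yields the left-right implication with operator $\mathcal T_{a}(f_0,g)(x) = \big(\partial_y(T(f_0;a)(y)g(y;a))|_{y=x}\big)/g(x;a) - $ (the constant term $f_0(0)$ coming from the $x=a$ Dirac contribution, as in Examples~\ref{ex:1}--\ref{ex:exp}), which upon expanding $\partial_y\big(\frac{y-b}{b-a}f_0(\frac{y-a}{b-a})\cdot\frac{1}{b-a}\big)$ gives exactly the displayed formula $\frac{1}{b-a}\big(\frac{x-b}{b-a}f_0'(\frac{x-a}{b-a}) + f_0(\frac{x-a}{b-a})\big)\mathbb{I}_{[a,b]}(x) - f_0(0)$.

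\textbf{Step~4 and the main obstacle.} It remains to prove the right-left implication (in the sense of~\eqref{eq:conc}), i.e.\ to solve the Stein equation~\eqref{eq:ch_st_eq}: $\partial_y\big(T(f_0^A;a_0)(y)g(y;a_0)\big)|_{y=x} = l_A(x;a_0,a_0)g(x;a_0)$, where $l_A(x;a_0,a_0) = (\mathbb{I}_A(x) - {\rm P}(Z_{a_0}\in A))\mathbb{I}_{[a_0,b]}(x)$. Since $T(\cdot;a_0)$ is invertible — $T(f_0;a_0)(x) = \frac{x-b}{b-a_0}f_0(\frac{x-a_0}{b-a_0})$, so $f_0$ is recovered from $T(f_0;a_0)$ by dividing by $\frac{x-b}{b-a_0}$ (nonzero on $[a_0,b)$) and undoing the affine change of variable — it suffices to antidifferentiate the right-hand side: $T(f_0^A;a_0)(x)g(x;a_0) = \int_b^x l_A(t;a_0,a_0)g(t;a_0)\,dt$ (choosing the lower limit $b$ so that the primitive vanishes at the right endpoint, which is the analogue of the $x_0$ in Assumption~B), then solve for $f_0^A$ and check $f_0^A\in\mathcal F_0$, i.e.\ that it is differentiable in the sense of distributions on $[0,1]$. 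I expect the main obstacle to be exactly this last verification, because dividing by $\frac{x-b}{b-a_0}$ introduces a factor blowing up as $x\to b$; one must check that the primitive $\int_b^x l_A(t;a_0,a_0)g(t;a_0)\,dt$ vanishes fast enough at $x=b$ (it is $O(b-x)$ since the integrand is bounded) to cancel this singularity and leave a function differentiable in the sense of distributions on $[0,1]$ — this is precisely where Assumption~B is used and where a little care with the boundary behavior is needed. Once this is done, the hypothesis ${\rm E}[\mathcal T_{a_0}(f_0^A,g)(X)] = 0$ reads ${\rm E}[(\mathbb{I}_{A\cap[a_0,b]}(X) - {\rm P}(Z_{a_0}\in A)\mathbb{I}_{[a_0,b]}(X))] = 0$, so if $X$ has support in $[a_0,b]$ then ${\rm P}(X\in A) = {\rm P}(Z_{a_0}\in A)$ for all $A\in\mathcal B_{\mathcal X}$, giving $X\,|\,X\in S_{a_0}\stackrel{\mathcal L}{=}Z_{a_0}$ and completing the proof.
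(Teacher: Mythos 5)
Your proposal is correct and follows exactly the route the paper intends: for this corollary the paper supplies only the exchanging operators $\tilde T(f_0;a)(x)=f_0((x-a)/(b-a))$ and $T(f_0;a)(x)=\frac{x-b}{b-a}f_0((x-a)/(b-a))$ and explicitly leaves the proof to the reader, and your Steps 1--4 (in particular the verification of Assumptions A and B, and the careful treatment of the $1/(x-b)$ singularity when inverting $T$ near the right endpoint in Step 4) supply precisely the missing details. One caveat: if you actually carry your expansion through, the boundary (Dirac-at-$x=a$) contribution yields the constant $-f_0(0)/(b-a)$ rather than the $-f_0(0)$ displayed in the corollary -- as one confirms by computing the bulk expectation under $X\sim U[a,b]$, which equals $f_0(0)/(b-a)$ -- so the displayed constant is a typo in the paper that your claim of obtaining ``exactly the displayed formula'' should not reproduce uncritically.
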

Similarly, one can also construct a $b$-characterizing operator and a $b$-charac-terization for the uniform law on $[a, b]$. A third way to characterize this law is to proceed as in Section \ref{sub:loc}  and construct a $\mu$-characterization, for $\mu$ a   location parameter introduced by considering the density $g(x-\mu)$ and working, through Corollary \ref{cor:location}, with respect to $\mu$. This yields the expression in Example \ref{ex:1}.

\subsection{The Student distribution}
Take the target distribution $g$ to be the density of a Student $T(\nu)$ with parameter of interest $\nu\in\R^+_0$, the tail weight parameter. This law belongs to $\mathcal{G}$ for all $\nu>0$ and  satisfies Assumption A at all $\nu>0$.  
%
It is readily seen that the  exchanging operator 
$$T(f_0; \nu)(x) = -\frac{1}{2\nu}\frac{\Gamma(\nu/2)}{\Gamma((\nu+1)/2)} x\left(1+\frac{x^2}{\nu}\right)^{\nu/2}f_0\left(\frac{x^2}{\nu}\right)$$
 yields the precious relationship \eqref{eq:exch}, with
$$\tilde{T}(f_0;\nu)(x)=\frac{\Gamma(\nu/2)}{\Gamma((\nu+1)/2)}(1+x^2/\nu)^{\nu/2}f_0(x^2/\nu).$$
Sufficient conditions on $f_0$ for the now usual requirements to be fulfilled are easily imposed. This leads to the following result.
\begin{corollary}\label{cor:student} Fix $\nu>2$. Let $\mathcal{F}_0$ be the collection of differentiable (in the sense of distributions) functions $f_0: \RÊ\to \R$ such that  $|f_0(x^2)|/\sqrt{1+x^2}$ and $|xf_0'(x^2)|$ are $m_\R$-integrable.  Then $\mathcal{F}_0$ is $\nu$-characterizing for $g$, with $\nu$-characterizing operator
$$\mathcal{T}_\nu(f_0, g)(x) =\xi(x;\nu) \left(2x^2f_0'\left(\frac{x^2}{\nu}\right)-f_0\left(\frac{x^2}{\nu}\right)\left(\frac{x^2}{1+\frac{x^2}{\nu}}-\nu\right)\right),$$
where $\xi(x; \nu) = -\Gamma(\nu/2)(2\nu^2\Gamma((\nu+1)/2))^{-1}\left(1+{x^2}/{\nu}\right)^{\nu/2}$. 
\end{corollary}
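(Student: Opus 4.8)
The plan is to verify that Corollary~\ref{cor:student} is obtained by following Steps 1--4 of the recipe in Section~\ref{sec:recipe}, applied to the $\nu$-parametric density $g(x;\nu)=\frac{\Gamma((\nu+1)/2)}{\sqrt{\nu\pi}\,\Gamma(\nu/2)}\left(1+x^2/\nu\right)^{-(\nu+1)/2}$. The first task is to identify the favored family of test functions $\tilde f_0(x;\nu)=\tilde T(f_0;\nu)(x)$ satisfying Condition~(i) of Definition~\ref{def}; I would take this verbatim from the statement, namely $\tilde T(f_0;\nu)(x)=\frac{\Gamma(\nu/2)}{\Gamma((\nu+1)/2)}(1+x^2/\nu)^{\nu/2}f_0(x^2/\nu)$, and check that $\int_\R \tilde T(f_0;\nu)(x)g(x;\nu)\,dx = \frac{1}{\sqrt{\nu\pi}}\int_\R (1+x^2/\nu)^{-1/2}f_0(x^2/\nu)\,dx$, which after the substitution $x=\sqrt{\nu}\,t$ becomes $\frac{1}{\sqrt\pi}\int_\R (1+t^2)^{-1/2}f_0(t^2)\,dt$ — manifestly independent of $\nu$, hence equal to a constant $c_{f_0}$, and finite precisely under the integrability hypothesis $|f_0(x^2)|/\sqrt{1+x^2}\in L^1(m_\R)$ imposed in the corollary.

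Next I would verify the exchangeability condition~\eqref{eq:exch}, i.e. that $\partial_\nu\!\left(\tilde T(f_0;\nu)(x)g(x;\nu)\right)\big|_{\nu=\nu_0}=\partial_y\!\left(T(f_0;\nu_0)(y)g(y;\nu_0)\right)\big|_{y=x}$ with the stated exchanging operator $T(f_0;\nu)(x)=-\frac{1}{2\nu}\frac{\Gamma(\nu/2)}{\Gamma((\nu+1)/2)}x(1+x^2/\nu)^{\nu/2}f_0(x^2/\nu)$. The left-hand side simplifies first because $\tilde T(f_0;\nu)(x)g(x;\nu)=\frac{1}{\sqrt{\nu\pi}}(1+x^2/\nu)^{-1/2}f_0(x^2/\nu)$, so only the $\nu$-dependence through $\frac{1}{\sqrt{\nu}}$, through $(1+x^2/\nu)^{-1/2}$, and through the argument $x^2/\nu$ of $f_0$ survives; differentiating in $\nu$ and evaluating at $\nu_0$ gives an explicit expression. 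The right-hand side is an ordinary $y$-derivative of $-\frac{1}{2\nu_0\sqrt{\nu_0\pi}}\,y\,(1+y^2/\nu_0)^{-1/2}f_0(y^2/\nu_0)$ (after cancelling the Gamma factors against $g$), which again is a routine product/chain-rule computation. Matching the two sides is the computational heart of the argument; I would organize it by pulling out the common factor $\frac{1}{\nu_0\sqrt{\nu_0\pi}}$ and checking the bracketed terms coincide. Once~\eqref{eq:exch} holds, Step~3 defines $\mathcal{F}_0$ as those $f_0\in\mathcal{X}^\star$ with $\tilde T(f_0;\cdot)\in\mathcal F(g;\nu_0)$, and one reads off that the hypotheses ``$f_0$ differentiable in the sense of distributions'' plus ``$|f_0(x^2)|/\sqrt{1+x^2}$ and $|xf_0'(x^2)|$ are $m_\R$-integrable'' are exactly what is needed for Conditions~(ii) and~(iii) of Definition~\ref{def} to hold on a neighborhood $\Theta_0=(\nu_0-\epsilon,\nu_0+\epsilon)$, using the restriction $\nu>2$ to control the tails uniformly (since $g(x;\nu)(1+x^2/\nu)^{\nu/2}\sim |x|$ and integrability of the differentiated product needs a little margin in $\nu$).

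Step~4 is then to solve the Stein equation~\eqref{eq:ch_st_eq}, $\partial_y\!\left(T(f_0^A;\nu_0)(y)g(y;\nu_0)\right)\big|_{y=x}=l_A(x;\nu_0,\nu_0)g(x;\nu_0)$; since the support $S=\R$ is independent of $\nu$, $l_A(x;\nu_0,\nu_0)=\mathbb I_A(x)-{\rm P}(Z_{\nu_0}\in A)$, and because $T(\cdot;\nu_0)$ multiplies $f_0$ by a nowhere-vanishing factor it is invertible, giving a candidate $f_0^A$ by one integration; invoking Assumption~A at $\nu_0$ (which the corollary assumes for all $\nu>0$) shows $f_0^A\in\mathcal F_0$, so Theorem~\ref{theo}(2) yields the right-to-left implication while Theorem~\ref{theo}(1) (equivalently Step~3) gives the left-to-right one. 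Finally, substituting the explicit $T$ and $g$ into $\mathcal T_{\nu_0}(f_0,g)(x)=\partial_y(T(f_0;\nu_0)(y)g(y;\nu_0))\big|_{y=x}/g(x;\nu_0)$ and simplifying — the Gamma factors combining into $\xi(x;\nu)$, the chain rule on $f_0(x^2/\nu)$ producing the $2x^2 f_0'(x^2/\nu)$ term, and the logarithmic derivative of $(1+x^2/\nu)^{\nu/2}$ together with the $1/(2\nu)$ prefactor producing the $\left(\tfrac{x^2}{1+x^2/\nu}-\nu\right)$ bracket — gives the stated operator. I expect the one genuine obstacle to be the exchangeability verification in Step~2 combined with bookkeeping the normalizing constants so that the final operator comes out in exactly the displayed form; everything else is either quoted from Theorem~\ref{theo} or a mechanical derivative.
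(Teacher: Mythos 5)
Your proposal follows exactly the route the paper intends: apply the Step 1--4 recipe of Section \ref{sec:recipe} with the stated $\tilde T$ and exchanging operator $T$, and indeed $\tilde T(f_0;\nu)(x)g(x;\nu)=(\nu\pi)^{-1/2}(1+x^2/\nu)^{-1/2}f_0(x^2/\nu)$ integrates to a $\nu$-free constant after $x=\sqrt\nu\,t$, the exchangeability identity \eqref{eq:exch} checks out (both sides reduce to $-\tfrac{1}{2\nu\sqrt{\nu\pi}}\bigl[(1+u)^{-1/2}f_0(u)-\tfrac{x^2}{\nu}(1+u)^{-3/2}f_0(u)+\tfrac{2x^2}{\nu}(1+u)^{-1/2}f_0'(u)\bigr]$ with $u=x^2/\nu$), and dividing by $g$ yields the displayed operator with the stated $\xi(x;\nu)$. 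The paper gives no more detail than ``mainly computational, along the same lines as the other results,'' so your write-up is a faithful and correct filling-in of that argument.
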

The proof of this result is mainly computational and follows along the same lines as that of all other similar results provided in this paper. 



It seems appropriate to conclude on this final example. Obviously, similar parameter-based  characterizations can be obtained, by means of the same tools, for gamma,    hypergeometric, Laplace, Pareto distributions, etc. As far as we know there exists no univariate characterization which cannot be obtained through our approach. 

\section{Applications} \label{sec:furtherwork}

In all works related with Stein's method the characterization is merely the first  step in a complicated and not a little mysterious process.  In this paper we do not discuss the intricacies and subtleties of the method, and rather refer the non-initiated reader to the monographs \citep{BC05, BC05b} or \citep{CGS10} for an overview. Moreover  our parametric approach to the characterizations  has to this date never been used for  any application. The purpose of this section is to provide two simple and direct consequences of our  vision. Deeper results are still under investigation.

\subsection{Solving Stein equations}

Suppose that, for  a given parametric target distribution  $g$, we dispose of  characterizations of the form $Z \sim g(\cdot;\theta_0) \Longleftrightarrow  {\rm E}[\mathcal{T}_{\theta_0}(f,g)(Z)] = 0$ for all $f$ in $\mathcal{F}(g;\theta_0)$, where $\mathcal{T}_{\theta_0}(f,g)$ is a Stein operators. Then a \emph{Stein equation} for $g$ at $\theta_0$ is a differential equation given by 
\begin{equation}\label{eq:eqst}\mathcal{T}_{\theta_0}(f_h,g)(x) = l(x)\end{equation}
for $l: \mathcal X \to \R$ some function. 

\begin{example} 
In the Gaussian case, we obtain the location equation 
$$f'(x) -xf(x) = l(x)$$
and the scale equation 
$$xf'(x) + (1-x^2) f(x)=l(x).$$
In the Exponential case we obtain the location equation 
$$\left(f'(x) - f(x)\right)\mathbb{I}_{\R^+}(x) = l(x)$$
and the scale equation 
$$(xf'(x)-(x-1)f(x))\mathbb{I}_{\R^+}(x) = l(x).$$
In the Poisson case we obtain the $\lambda$-equation
$$ \left(f_0(x+1)-\frac{x}{\lambda_0}f_0(x)\right)\mathbb{I}_{\N}(x) = l(x).$$
\end{example}


 A careful reading of the different proofs provided in this paper shows that Theorem \ref{theo}  not only yields  Stein operators, but also solutions to the corresponding Stein equations (see  equations \eqref{eq:stein_sol}, \eqref{eq:stein_eq_loc}  and \eqref{eq:stein_eq_discrete}).  More specifically, our way of writing the operator (as a single differential) obviously allows for solving \emph{all} such equations in a unified way by simple integration. Note in particular how, in the discrete case, the solution is obtained           
through straightforward summation. In other words our approach allows for solving \emph{all} Stein equations in a routine fashion. 
 
 \subsection{Stein's method and information theory}
Although there are many consequences to our Theorem \ref{theo}, perhaps the most intuitive is that it provides a hitherto unsuspected direct link between Stein's method and information theoretic tools. Such results are, however, outside the scope and purpose of the present work and will be the subject of separate publications. We nevertheless wish to suggest the flavor of this connection, and therefore conclude the paper with a particularly appealing result. 

 Choose two parametric densities $p, q \in\mathcal{G}$ sharing the same support $S_\theta$. Take $f \in \mathcal{F}(p; \theta_0)$. We obviously have
\begin{align*}
& \mathcal{T}_{\theta_0}(f, p)(x) =  \frac {\left.\partial_\theta f(x;\theta) p(x; \theta)\right|_{\theta = \theta_0}}{p(x; \theta_0)} \\
					       & = \frac{\left.\partial_\theta f(x;\theta) q(x; \theta)\right|_{\theta = \theta_0}}{q(x; \theta_0)}\frac{p(x; \theta_0)}{p(x; \theta_0)} + \frac{f(x; \theta_0)   q(x; \theta_0)}{p(x; \theta_0)}\left.\partial_\theta \left(\frac{p(x;\theta)}{ q(x; \theta)}\right)\right|_{\theta = \theta_0}.
 \end{align*}
Straightforward simplifications then yield our final lemma. 

\begin{lemma}[Factorization of Stein operators] For all $f \in \mathcal{F}(p; \theta_0)$, we have
\begin{equation} \label{eq:char}\mathcal{T}_{\theta_0}(f, p)(x) = \mathcal{T}_{\theta_0}(f, q)(x)+ f (x; \theta_0) r_{\theta_0}(p, q)(x),\end{equation}
with 
\begin{equation} \label{eq:info}r_{\theta_0}(p, q)(x) :=    { \frac{\left.{\partial_\theta}p(x; \theta)\right|_{\theta = \theta_0}}{p(x; \theta_0)}} - { \frac{\left.{\partial_\theta}q(x; \theta)\right|_{\theta = \theta_0}}{q(x; \theta_0)}} .\end{equation}
\end{lemma}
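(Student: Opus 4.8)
The plan is to verify the identity \eqref{eq:char} by direct algebraic manipulation, starting from the definition of the Stein operator in \eqref{eq:operator} specialized to the case where the test function carries no explicit $\theta$-dependence in the ``outer'' factor beyond what is inherited from the density. First I would write out $\mathcal{T}_{\theta_0}(f,p)(x)$ using the product rule for the derivative in the sense of distributions: since
\begin{equation*}
\left.\partial_\theta\big(f(x;\theta)p(x;\theta)\big)\right|_{\theta=\theta_0} = \left.\partial_\theta f(x;\theta)\right|_{\theta=\theta_0}\,p(x;\theta_0) + f(x;\theta_0)\left.\partial_\theta p(x;\theta)\right|_{\theta=\theta_0},
\end{equation*}
dividing by $p(x;\theta_0)$ gives $\mathcal{T}_{\theta_0}(f,p)(x) = \left.\partial_\theta f(x;\theta)\right|_{\theta=\theta_0} + f(x;\theta_0)\,\left.\partial_\theta p(x;\theta)\right|_{\theta=\theta_0}/p(x;\theta_0)$, and likewise for $q$.

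Next I would simply subtract the two expanded forms: the leading terms $\left.\partial_\theta f(x;\theta)\right|_{\theta=\theta_0}$ coincide and cancel, leaving
\begin{equation*}
\mathcal{T}_{\theta_0}(f,p)(x) - \mathcal{T}_{\theta_0}(f,q)(x) = f(x;\theta_0)\left(\frac{\left.\partial_\theta p(x;\theta)\right|_{\theta=\theta_0}}{p(x;\theta_0)} - \frac{\left.\partial_\theta q(x;\theta)\right|_{\theta=\theta_0}}{q(x;\theta_0)}\right),
\end{equation*}
which is exactly $f(x;\theta_0)\,r_{\theta_0}(p,q)(x)$ by definition \eqref{eq:info}. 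Rearranging yields \eqref{eq:char}. The computation displayed just before the lemma statement already essentially carries this out via the ``insert $q/q$ and $p/p$'' trick; I would present the cleaner route through the product rule, noting that it is legitimate because $f \in \mathcal{F}(p;\theta_0)$ guarantees (via Condition (ii) of Definition~\ref{def}) that $\theta \mapsto f(\cdot;\theta)p(\cdot;\theta)$ is differentiable in the sense of distributions, and the shared support hypothesis $S_\theta^{(p)} = S_\theta^{(q)} =: S_\theta$ ensures all the divisions are handled consistently under the convention of Definition~\ref{def2} (division by an indicator is multiplication by it), so that every term is supported on $S_{\theta_0}$ and the $p(x;\theta_0)/p(x;\theta_0)$ and $q(x;\theta_0)/q(x;\theta_0)$ factors reduce to $\mathbb{I}_{S_{\theta_0}}(x)$.

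There is essentially no obstacle here: the statement is a bookkeeping identity, and the only point requiring a moment's care is the treatment of the supports — one must check that $r_{\theta_0}(p,q)$ is itself well-defined under the stated convention (both ratios are supported on $S_{\theta_0}$, including the contributions coming from differentiating the indicator $\mathbb{I}_{S_\theta}$ when $S_\theta$ depends on $\theta$, which match on the common support), and that the equality is understood as an equality of functions on $\mathcal{X}$ supported on $S_{\theta_0}$. I would close by remarking that, combined with the first statement of Theorem~\ref{theo}, taking expectations under $p(\cdot;\theta_0)$ of \eqref{eq:char} immediately gives $\mathrm{E}[\mathcal{T}_{\theta_0}(f,q)(Z_{\theta_0})] = -\mathrm{E}[f(Z_{\theta_0};\theta_0)\,r_{\theta_0}(p,q)(Z_{\theta_0})]$, which is the form relevant for comparing a distribution to a ``nearby'' target and is the point of contact with information-theoretic quantities alluded to in the surrounding text.
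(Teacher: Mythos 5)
Your proof is correct and is essentially the paper's own argument: the paper verifies the identity by writing $f\,p = (f q)\cdot(p/q)$ and applying the product rule to that factorization, while you expand $\left.\partial_\theta(fp)\right|_{\theta=\theta_0}$ and $\left.\partial_\theta(fq)\right|_{\theta=\theta_0}$ separately and subtract — the same one-line algebraic verification of a bookkeeping identity. The only (cosmetic) difference is that your route presupposes that $\theta\mapsto f(x;\theta)$ is differentiable on its own, which is slightly more than Condition (ii) of Definition~\ref{def} literally grants, whereas the paper's factorization only differentiates $fq$ and $p/q$; at the informal level at which this lemma is stated both versions are acceptable, and your remarks on supports and on taking expectations under $p(\cdot;\theta_0)$ are consistent with the paper's conventions.
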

We call the operator $r_{\theta_0}$ a \emph{generalized (standardized) score function} because  specifying the role of $\theta$ (location, scale, ...)  as well as its nature (discrete, continuous)  allows to recover a whole family of   {score functions} discussed in  \citep{J04}, \citep{KHJ05} or \citep{BJKM10}.  Such an observation obviously has an intriguing number of immediate applications, but also opens new lines of research which are currently under investigation. See \citep{LS11} for first results in this direction.

\appendix

\section{Technical proofs}\label{app:proofs}
\begin{proof}[Proof of equality \eqref{eq:goodnews}] 

First note that 
\begin{align*}  & \left.\partial_t \left(f_A(x; t) g(x; t)\right)\right|_{t=\theta} \\
		        & \quad \quad  = \left. \partial_t\left( \int_{\theta_0}^t l_A(x; u, t)g(x; u)dm_\Theta(u)\right)\right|_{t=\theta}\\
		        & \quad \quad  = l_A(x; \theta, \theta)g(x; \theta) + \int_{\theta_0}^\theta \left.\partial_t \left(l_A(x; u, t)\right)\right|_{t=\theta}g(x; u) dm_\Theta(u).\end{align*}
Now we have 
\begin{align*}	  \left.\partial_t \left(l_A(x; u, t)\right)\right|_{t=\theta}&  =   \left.\partial_t 	\left(\mathbb{I}_{S_t}(x)\right)\right|_{t=\theta} \left(\mathbb{I}_A(x)-{\rm P}(Z_u\in A\, | \, Z_u\in S_\theta)\right) \\
										       &  \quad \quad - \left.\partial_t\left({\rm P}(Z_u\in A\, | \, Z_u\in S_t)\right)\right|_{t=\theta} \mathbb{I}_{S_\theta}(x).\end{align*} 							
On the one hand, we easily see that  the function 
\begin{align*}  & H_1(x;  \theta) := \\
 &   \quad \quad  \left.\partial_t \left(\mathbb{I}_{S_t}(x)\right)\right|_{t=\theta}  \int_{\theta_0}^\theta \left(\mathbb{I}_A(x)-{\rm P}(Z_u\in A\, | \, Z_u\in S_\theta)\right) g(x; u) dm_\Theta(u)\end{align*}
is well-defined, bounded uniformly in $\theta$ over $\Theta_0$ by a $m_{\mathcal{X}}$-integrable function  and satisfies $H_1(x;  \theta_0)=0.$ On the other hand, we have 
\begin{align*}  &   \left. \partial_t\left({\rm P}(Z_u\in A\, | \, Z_u\in S_t)\right)\right|_{t=\theta}  \\
			 					  & 	\quad \quad = \dfrac{\left. \partial_t	\left( {\rm P}(Z_u\in A\cap S_t)\right) \right|_{t=\theta}}{{\rm P}(Z_u\in S_\theta)} - 	\left. \partial_t	\left( {\rm P}(Z_u\in  S_t)\right) \right|_{t=\theta}	\dfrac{{\rm P}(Z_u\in A\cap S_\theta)}{{\rm P}(Z_u\in  S_\theta)^2},\end{align*}
where  clearly both derivatives are well-defined.  Hence the function 
 $$H_2(x;  \theta):= \mathbb{I}_{S_\theta}(x) \int_{\theta_0}^\theta   \left. \partial_t\left({\rm P}(Z_u\in A\, | \, Z_u\in S_t)\right)\right|_{t=\theta}g(x; u) dm_\Theta(u)$$
is also well-defined, bounded uniformly in $\theta$ over $\Theta_0$ by a $m_{\mathcal{X}}$-integrable function  and satisfies $H_2(x;  \theta_0)=0.$
Defining 
  $$H(x;  \theta):=H_1(x;  \theta)-H_2(x;  \theta)$$
  we see that all  the assertions in the proof of Theorem \ref{theo} hold, and, moreover,  that 
\begin{align*}   \left.\partial_t \left(f_A(x; t) g(x; t)\right)\right|_{t=\theta_0}  & =  l_A(x; \theta_0, \theta_0)g(x; \theta_0)  + H(x;  \theta_0) \\ 
														& = l_A(x; \theta_0, \theta_0)g(x; \theta_0).\end{align*} 
This completes the proof of Theorem~\ref{theo}. 					       
\end{proof}

\begin{proof}[Proof of Corollary \ref{cor:location} (location)]
We apply the method described in Section \ref{sec:recipe}. \\

\noindent  {\it Step 1:} Choose $\tilde{T}(f_0; \mu)(x) = f_0(x-\mu)$. \\

\noindent  {\it Step 2:} Set $T(f_0; \mu)(x) = -f_0(x-\mu)$.\\

\noindent  {\it Step 3:}  One easily sees that, for any $f_0 \in \mathcal{F}_0$,  Conditions ($\mu$-i)-($\mu$-iii) on $f_0$ entail that Conditions (i)-(iii) are satisfied by $\tilde{T}(f_0; \mu)(x)$.\\

\noindent  {\it Step 4:}  Consider the solution of the Stein equation given by 
$$ f_0^A(x-\mu_0) = -\frac{1}{g_0(x-\mu_0)}\left( \int_{x_0}^x l_A(y; \mu_0, \mu_0)g_0(y-\mu_0) dm_\mathcal{X}(y)+c(x)\right)$$
for some $x_0\in \mathcal{X}$, where the function $x\mapsto c(x)$ has derivative (in the sense of distributions) equal to zero and is defined in such a way that $( \int_{x_0}^x l_A(y; \mu_0, \mu_0)g_0(y-\mu_0) dm_\mathcal{X}(y)+c(x))\partial_x\mathbb{I}_{S}(x-\mu_0)=0$ over $\mathcal{X}$. This function can be expressed as a sum of Dirac delta functions whose vertices are determined by $\partial_x\mathbb{I}_{S}(x-\mu_0)$.  This yields the candidate solution
\begin{equation}\label{eq:stein_eq_loc} f_0^A(x) =  -\frac{1}{g_0(x)} \left(\int_{x_0}^{x+\mu_0} l_A(y; \mu_0, \mu_0)g_0(y-\mu_0) dm_\mathcal{X}(y)+c(x+\mu_0)\right). \end{equation}
For this function to belong to $\mathcal{F}_0$, we need Condition ($\mu$-ii), which is obvious, Condition ($\mu$-iii), which is also obvious thanks to Assumption A once again, and Condition ($\mu$-i) which will hold as soon as 
\begin{align*}
&\left|\int_{\mathcal{X}}f_0^A(x)g_0(x) dm_\mathcal{X}(x)\right| \\
 &\hspace{1cm}= C+\left|\int_{\mathcal{X}} \int_{x_0}^{x+\mu_0} l_A(y; \mu_0, \mu_0)g(y-\mu_0) dm_\mathcal{X}(y)\mathbb{I}_S(x)dm_\mathcal{X}(x)\right|<\infty,
 \end{align*}
where $C=\int_{\mathcal{X}}c(x+\mu_0)\mathbb{I}_S(x)dm_\mathcal{X}(x)$ is finite. Since Assumption B then ensures that the quantity $\left|\int_{\mathcal{X}}f_0^A(x)g_0(x) dm_\mathcal{X}(x)\right|$ is bounded, Condition ($\mu$-i) is satisfied as well, which concludes the proof.
\end{proof}

\begin{proof}[Proof of Corollary \ref{cor:discrete} (discrete)]

In this framework, the exchangeability condition \eqref{eq:exch} reads 
\begin{equation}\label{eq:exch_dis} \partial_\theta (\tilde{T}(f_0; \theta)(x)\, g(x; \theta))|_{\theta=\theta_0} = \Delta^+_{x}\left(T(f_0; \theta_0)(x)g(x; \theta_0) \right),\end{equation}
for some $f_0\in\mathcal{F}_0$. In order to obtain the announced $\theta$-characterizing operator $\mathcal{T}_{\theta_0}(f_0,g)$, we define 
\begin{equation}\label{stab:dis}
\tilde{T}(f_0; \theta)(x) = \dfrac{\Delta_x^+(f_0(x) g(x; \theta))}{g(x; \theta)g(0;\theta)}
\end{equation}
and the (invertible) exchanging operator 
$$T(f_0; \theta_0)(x) = f_0(x)\dfrac{\partial_\theta (g(x; \theta)/g(0;\theta))|_{\theta=\theta_0}}{g(x; \theta_0)}.$$ 
One readily checks that these choices satisfy the exchangeability condition~(\ref{eq:exch_dis}).

Fix $\theta_0\in\Theta$. The sufficient condition is immediate. 
For the necessary condition to hold, we solve 
$$\Delta^+_{x}\left(T(f_0^A; \theta_0)(x)g(x; \theta_0) \right) =  l_A(x; \theta_0, \theta_0)g(x; \theta_0),$$
with $l_A$ as before, to obtain the candidate solution
\begin{equation}\label{eq:stein_eq_discrete}f_0^A(x) = (\psi(x; \theta_0))^{-1}\sum_{j=0}^{x-1} l_A(j; \theta_0, \theta_0)g(j; \theta_0),\end{equation}
where the sum over an empty set is 0. Assumption A' guarantees that this function  belongs to $\mathcal{F}_0$.
\end{proof}

\section*{Acknowledgements}
Christophe Ley's research is supported by aMandat de Charg\'e de recherche from the Fonds National de la Recherche Scientifique, Communaut\'e fran\c{c}aise de Belgique. Christophe Ley is also member of E.C.A.R.E.S. Yvik Swan's research is supported by a Mandat de Charg\'e de recherche from the Fonds National de la Recherche Scientifique, Communaut\'e fran\c{c}aise de Belgique.

\end{document}